\newcommand*{\mint}[1]{%
  \mint@l{#1}{}%
}
\newcommand*{\mint@l}[2]{%
  \@ifnextchar\limits{%
    \mint@l{#1}%
  }{%
    \@ifnextchar\nolimits{%
      \mint@l{#1}%
    }{%
      \@ifnextchar\displaylimits{%
        \mint@l{#1}%
      }{%
        \mint@s{#2}{#1}%
      }%
    }%
  }%
}
\newcommand*{\mint@s}[2]{%
  \@ifnextchar_{%
    \mint@sub{#1}{#2}%
  }{%
    \@ifnextchar^{%
      \mint@sup{#1}{#2}%
    }{%
      \mint@{#1}{#2}{}{}%
    }%
  }%
}
\def\mint@sub#1#2_#3{%
  \@ifnextchar^{%
    \mint@sub@sup{#1}{#2}{#3}%
  }{%
    \mint@{#1}{#2}{#3}{}%
  }%
}
\def\mint@sup#1#2^#3{%
  \@ifnextchar_{%
    \mint@sup@sub{#1}{#2}{#3}%
  }{%
    \mint@{#1}{#2}{}{#3}%
  }%
}
\def\mint@sub@sup#1#2#3^#4{%
  \mint@{#1}{#2}{#3}{#4}%
}
\def\mint@sup@sub#1#2#3_#4{%
  \mint@{#1}{#2}{#4}{#3}%
}
\newcommand*{\mint@}[4]{%
  \mathop{}%
  \mkern-\thinmuskip
  \mathchoice{%
    \mint@@{#1}{#2}{#3}{#4}%
        \displaystyle\textstyle\scriptstyle
  }{%
    \mint@@{#1}{#2}{#3}{#4}%
        \textstyle\scriptstyle\scriptstyle
  }{%
    \mint@@{#1}{#2}{#3}{#4}%
        \scriptstyle\scriptscriptstyle\scriptscriptstyle
  }{%
    \mint@@{#1}{#2}{#3}{#4}%
        \scriptscriptstyle\scriptscriptstyle\scriptscriptstyle
  }%
  \mkern-\thinmuskip
  \int#1%
  \ifx\\#3\\\else_{#3}\fi
  \ifx\\#4\\\else^{#4}\fi
}
\newcommand*{\mint@@}[7]{%
  \begingroup
    \sbox0{$#5\int\m@th$}%
    \sbox2{$#5\int_{}\m@th$}%
    \dimen2=\wd0 %
    \let\mint@limits=#1\relax
    \ifx\mint@limits\relax
      \sbox4{$#5\int_{\kern1sp}^{\kern1sp}\m@th$}%
      \ifdim\wd4>\wd2 %
        \let\mint@limits=\nolimits
      \else
        \let\mint@limits=\limits
      \fi
    \fi
    \ifx\mint@limits\displaylimits
      \ifx#5\displaystyle
        \let\mint@limits=\limits
      \fi
    \fi
    \ifx\mint@limits\limits
      \sbox0{$#7#3\m@th$}%
      \sbox2{$#7#4\m@th$}%
      \ifdim\wd0>\dimen2 %
        \dimen2=\wd0 %
      \fi
      \ifdim\wd2>\dimen2 %
        \dimen2=\wd2 %
      \fi
    \fi
    \rlap{%
      $#5%
        \vcenter{%
          \hbox to\dimen2{%
            \hss
            $#6{#2}\m@th$%
            \hss
          }%
        }%
      $%
    }%
  \endgroup
}
\def\rr{{\mathbb R}}
\def\fz{\infty}
\def\az{\alpha}
\def\dist{{\mathop\mathrm{\,dist\,}}}
\def\loc{{\mathop\mathrm{\,loc\,}}}
\def\lz{\lambda}
\def\dz{\delta}
\def\bdz{\Delta}
\def\ez{\epsilon}
\def\gz{{\gamma}}
\def\bint{{\ifinner\rlap{\bf\kern.35em--}
\int\else\rlap{\bf\kern.45em--}\int\fi}\ignorespaces}
\def\bbint{{\ifinner\rlap{\bf\kern.35em--}
\hspace{0.078cm}\int\else\rlap{\bf\kern.45em--}\int\fi}\ignorespaces}
\newtheorem{thm}{Theorem}[section]
\newtheorem{lem}[thm]{Lemma}
\newtheorem{rem}[thm]{Remark}
\numberwithin{equation}{section}
\title
{\Large\bf The boundedness of stable solutions to semilinear elliptic
equations with linear lower bound on nonlinearities
\footnotetext{\hspace{-0.35cm}
\endgraf
 2020 {\it Mathematics Subject Classification:} 35J61, 35B65, 35B35.
 \endgraf {\it Key words and phrases:}  semilinear elliptic equation, stable solution, H\"older regularity
\endgraf The author is supported by National Natural Science 
Foundation of China (No. 12201612 \& No. 11688101) and  Project  funded  by China Postdoctoral Science Foundation(No. BX20220328). The author is also supported by National
 key R \& D Program of China (No. 2021 YFA 1003100).}}
\author{Fa Peng}
 \date{\today}
\begin{document}

\arraycolsep=1pt
\allowdisplaybreaks
 \maketitle

\begin{center}
\begin{minipage}{13.5cm}\small
 \noindent{\bf Abstract.}\quad
Let $2\le n\le9$.
Suppose that  $f:\rr\to \rr$ is locally Lipschitz function satisfying
$f(t)\ge A\min\{0,t\}-K$ for all $t\in \rr$ with some constant $A\ge0$ and $K\ge 0$. We establish
an a priori
interior H\"older regularity of $C^2$-stable solutions to the semilinear elliptic equation
$-\bdz u=f(u)$. If, in addition, $f$ is nondecreasing and convex,
we obtain the interior H\"older regularity of $W^{1,2}$-stable solutions. Note that the dimension $n\le9$ is optimal.

\end{minipage}
\end{center}

\section{Introduction}
Let $\Omega$ be a bounded domain in $\rr^n$ with $n\ge2$. We are considered with the
semilinear elliptic equation
\begin{equation}\label{eq}
-\bdz u=f(u)\quad{\rm in}\quad \Omega,
\end{equation}
where the  nonlinearity $f:\rr\to \rr$ is locally Lipschitz in $\rr$ (for short
$f\in{\rm Lip}_{\loc}(\rr)$). Recall that, we say that $u
:\Omega\to \rr$ is a
$W^{1,2}_{\loc}$-weak solution to equation \eqref{eq} if $u\in W^{1,2}_{\loc}(\Omega)$, $f(u)\in L^1_{\loc}(\Omega)$ and
\begin{equation}\label{weak}
\int_{\Omega}Du\cdot D\xi\,dx-\int_{\Omega}f(u)\xi\,dx=0\quad \forall \xi\in C^\fz_c(\Omega).
\end{equation}
Moreover,  a $W^{1,2}_{\loc}$-weak solution $u$ is called as a stable solution if
$f'_{-}(u)\in L^1_{\loc}(\Omega)$ and
\begin{equation}\label{st}
\int_{\Omega}f'_{-}(u)\xi^2\,dx\le \int_{\Omega}|D\xi|^2\,dx\quad \forall \xi\in C^\fz_c(\Omega),
\end{equation}
where we write
$$f'_{-}(t):=\liminf_{h\to 0}\frac{f(t+h)-f(t)}{h},\quad \forall t\in \rr.$$
Notice that $f'_{-}(t)=f'(t)$ whenever $f\in C^1(\rr)$.

In 1975, Crandall-Rabinowitz \cite{cr75} initiated the study of the regularity of
stable solutions for the exponential and power-type nonlinearities when $n\le 9$.
In general, Brezis in \cite{b03}
asked an open problem for boundedness of stable solutions when $n\le 9$ to
a large class of nonlinearities.
The dimension $n\le9$ is optimal to get the boundedness of
stable solutions, since Joseph-Lundgren \cite{jl73} showed that
$-2\ln|x|\in W^{1,2}_0(B_1)\backslash L^\fz(B_1)$ is a
stable solution to $-\bdz u=2(n-2)e^u$ in $B_1$ when $n\ge 10$.

Towards this open problem,  there have been great interests  to study the
boundedness of stable solutions.
The boundedness of stable solutions was proved by Nedev \cite{n20} for $n\le 3$, and by
Cabr\'e \cite{c10} for $n=4$ when $f\in C^1(\rr)$ is nondecreasing, convex and nonnegative. Recently,  through the delicate
compactness method, Cabr\'e, Figalli, Ros-Oton and Serra \cite{cfrs} obtained the boundedness of stable solutions up to
 the optimal dimension $n=9$ when $f\in {\rm Lip}_{\loc}(\rr)$ is nondecreasing, convex and nonnegative and hence
 they completely solved the Brezis' problem \cite{b03}; see also a quantitative proof by
 Cabr\'e \cite{c22,c23}. The key point is that, if
 $f\in {\rm Lip}_{\loc}(\rr)$ is nonnegative,  Cabr\'e, Figalli, Ros-Oton and Serra
 \cite[Theorem 1.2]{cfrs}
 established the following a  priori $C^{0,\az}$-estimates for all
 $C^2$-stable solutions which are bounded by a $L^1$-norm of $u$:
\begin{align}\label{ho}
\|u\|_{C^{0,\az}(B_{1/2})}\le C(n)\|u\|_{L^1(B_1)}
\end{align}
for some dimensional constant $\az\in (0,1)$. Moreover, if $f$ is nondecreasing and convex in addition, they also showed that the $C^{0,\az}(\overline \Omega)$-norm of $u$ is bounded by the $L^1(\Omega)$-norm of $u$ in a bounded $C^3$ domain $\Omega\subset \rr^n$, with $u=0$ on $\partial \Omega$. It is worth noting that  the new quantitative proof of boundary H\"older regularity is given by Cabr\'e \cite{c22,c23}. Recently,
Erneta \cite{e23} improves the boundary result of \cite{cfrs} to
include $C^{1,1}$ domains, instead of $C^3$ domains. The $C^{0,\az}$-bound via $L^1$-norm is crucial to get the boundedness of extremal solution; see
\cite{cfrs,c22}. Throughout this paper, we denote by  $B_r(y)$ the ball of radius of
$r$ centered at $y$ and simply by $B_r$ in case the ball is centered at the origin; we also denote by $C(a,b,\cdots)$ a positive constant
depending only on the parameters $a,b, \cdots$ whose value may change line
to line. For any function $v\in W^{1,2}(B_r)$, the norm $\|v\|_{W^{1,2}(B_r)}$ stands for
the norm of $W^{1,2}$-Sobolev space.

 Note that, the assumption
$f\ge 0$ is needed to get the  interior H\"older estimate \eqref{ho} when $n\le9$.
Cabr\'e \cite{c21,c22} asked
a question if an interior $C^{0,\az}$-estimate
could hold for $n\le 9$ without the hypothesis $f\ge 0$.
If the stable solution is radial, the interior $L^\fz$-estimate of stable
solutions was got by Cabr\'e-Capella \cite{cc06} for
all $f\in C^1(\rr)$. For nonradial case, it is well-known that the interior $L^\fz$-estimate of stable
solutions holds for all $f\in C^1(\rr)$  by Cabr\'e \cite{c10,c19} when $n\le 4$.
When $n=5$, for all $f\in {\rm Lip}_{\loc}(\rr)$ the H\"older estimates \eqref{ho}
with the right-hand side replaced
by the $L^2$-norm of $Du$  was established in \cite{pzz22}.
 When $n\le 9$ and $f\ge -K$ for some
constant $K\ge 0$, Cabr\'e \cite{c22,c23} showed that
the following interior $C^{0,\az}$-estimate via a quantitative method:
\begin{align}\label{ho1}
\|u\|_{C^{0,\az}(B_{1/2})}\le C(n)(\|u\|_{L^1(B_1)}+K).
\end{align}

In this paper,  under the assumption:
\begin{align}\label{as-f}
f(t)\ge A\min\{0,t\}-K,\quad \forall t\in \rr,\quad {\rm for\ some\ constant\
}\ A\ge0,K\ge 0,
\end{align}
we prove that the following interior $C^{0,\az}$-regularity of the stable solution to equation \eqref{eq}.
\begin{thm}\label{th1}
Let $2\le n\le 9$, suppose that $f\in {\rm Lip}_{\loc}(\rr)$ satisfies \eqref{as-f}
and let $u\in C^2(\Omega)$ be a stable solution to \eqref{eq}.
Then
\begin{align}\label{holder}
\underset{{B_r(x)}}{\rm osc}\ u\le C\left(\frac{r}R\right)^{\alpha}(\|u\|_{W^{1,2}(B_{2R}(x))}+1),\quad
\forall x\in\Omega,\, 0<r<R<\frac1 4\dist(x,\partial\Omega),
\end{align}
where $\az=\az(n)\in (0,1)$ and $C=C(n,A,K)$.
\end{thm}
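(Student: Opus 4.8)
The plan is to reduce Theorem~\ref{th1} to Cabr\'e's interior estimate \eqref{ho1} for stable solutions whose nonlinearity is bounded below by a constant; \eqref{ho1} already carries the essential difficulty, namely the use of the stability inequality \eqref{st} up to the optimal dimension $n\le9$. What prevents a direct appeal to \eqref{ho1} is that, under the weaker hypothesis \eqref{as-f}, neither $u$ nor $f(u)=-\bdz u$ need be bounded from below. The key observation — and the reason the \emph{linear} lower bound in \eqref{as-f} is exactly the right hypothesis — is that the negative part $u^-$ then satisfies a \emph{linear} elliptic differential inequality, for which an interior $L^\infty$ bound is available; a superlinear lower bound on $f$ would instead give a superlinear inequality for $u^-$, along which no such bound need propagate.

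First I would fix $x\in\Omega$ and $0<R<\tfrac14\dist(x,\partial\Omega)$, and note, using $u\in C^2$, equation \eqref{eq}, the bound \eqref{as-f} and Kato's inequality, that in the sense of distributions on $B_{2R}(x)$
\begin{equation*}
\bdz u^-\ \ge\ \mathbbm{1}_{\{u<0\}}\bigl(-\bdz u\bigr)\ =\ \mathbbm{1}_{\{u<0\}}f(u)\ \ge\ \mathbbm{1}_{\{u<0\}}\bigl(A\min\{0,u\}-K\bigr)\ =\ -\mathbbm{1}_{\{u<0\}}\bigl(Au^-+K\bigr)\ \ge\ -\bigl(Au^-+K\bigr),
\end{equation*}
so $u^-\ge0$ is a weak subsolution of the \emph{linear} equation $-\bdz w=Aw+K$ on $B_{2R}(x)$ (this is the only place \eqref{as-f} is used). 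I would then apply the De Giorgi--Nash--Moser local boundedness estimate to $u^-$; because of the zeroth order term the Moser iteration only runs on balls of radius $\rho$ with $A\rho^2\le c(n)$, so I would use $\rho\simeq\min\{A^{-1/2},R\}$ and chain the estimates over a finite covering of a ball slightly smaller than $B_{2R}(x)$, obtaining $u^-\le M:=C(n,A,K)(\|u\|_{W^{1,2}(B_{2R}(x))}+1)$ there; the $L^2$-norm of $u^-$ that enters the estimate is dominated by $\|u\|_{W^{1,2}(B_{2R}(x))}$. Stability is not used in this step.

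With this lower bound in hand (and $M$ enlarged by $1$ so that $u>-M$ on the ball), I would replace $f$ by the truncation $\tilde f(t):=f(\max\{t,-M\})\in{\rm Lip}_{\loc}(\rr)$, which satisfies $\tilde f\ge A\min\{0,-M\}-K=:-K'$ on all of $\rr$ and which — since $u>-M$ — coincides with $f$, together with its one-sided derivative $f'_-$, along the range of $u$; hence $u$ remains a $C^2$ stable solution of $-\bdz u=\tilde f(u)$, now with a nonlinearity bounded below by the constant $-K'$. Applying \eqref{ho1}, dilated to the ball in question, yields an a priori $C^{0,\az}$ estimate for $u$ on a smaller ball whose right-hand side is a local $L^1$-norm of $u$ plus $K'$, hence is bounded by $C(n,A,K)(\|u\|_{W^{1,2}(B_{2R}(x))}+1)$. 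Dilating \eqref{ho1} over the family of balls $B_\rho(x)$, $\rho\lesssim R$, together with a routine covering argument, then produces the oscillation decay \eqref{holder} with $\az=\az(n)\in(0,1)$ and $C=C(n,A,K)$.

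I expect the most delicate point to be the $L^\infty$ bound on $u^-$: it must be made quantitative and uniform over the whole range $A\ge0$, which is exactly why the De Giorgi iteration has to be run at the $A$-adapted scale $\rho\simeq A^{-1/2}$ rather than at scale $R$, and one must then carry the powers of $R$ and the dependence on $A,K$ through the dilations above to land precisely on the form \eqref{holder}. A secondary but necessary point is that the truncation must not alter stability; this is guaranteed because the lower bound on $u$ is arranged to be strict, so that $f$ and $f'_-$ — hence the stability inequality \eqref{st} — are untouched along the range of $u$.
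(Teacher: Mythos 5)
Your reduction is correct in substance, but it follows a genuinely different route from the paper. You bound the negative part $u^-$ through Kato's inequality and De Giorgi--Nash--Moser applied to the linear inequality $-\bdz u^-\le Au^-+K$ (run at the $A$-adapted scale $\min\{A^{-1/2},R\}$), then truncate $f$ below the strict lower bound of $u$ so that the truncated nonlinearity is bounded below by a constant while $f$, $f'_-$, and hence the stability inequality \eqref{st} are untouched along the range of $u$, and finally invoke Cabr\'e's quantitative interior estimate \eqref{ho1} for $f\ge -K$. The paper never uses \eqref{ho1}: it reworks the compactness step of \cite{cfrs} under the hypothesis \eqref{as-f} (Lemma \ref{key-it}, where the extra lower-bound assumption \eqref{icd} is what restores the superharmonicity of the blow-up limit), proves a modified iteration lemma (Lemma \ref{it}) accommodating the additional sequence $d_j$ coming from the Morrey bound of Lemma \ref{mo}, deduces the decay \eqref{du-mo} of the weighted radial derivative, and concludes via the radial-derivative-to-oscillation estimate of \cite{c22}. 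Your route buys brevity and modularity: it needs essentially only the $L^2$-norm of $u$ plus \eqref{ho1} as a black box, and it isolates exactly why a linear lower bound is the natural threshold (it becomes a linear differential inequality for $u^-$). The paper's route is heavier but does not rely on the quantitative theorem \eqref{ho1}; its stability inputs are only Lemma \ref{key-le} and the Sternberg--Zumbrun inequality, and the new compactness/iteration lemmas are precisely what also drive Theorem \ref{th2} and the announced $p$-Laplacian extension.

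Two caveats, neither specific to your argument. First, the assertion that the constants ``land precisely on the form \eqref{holder}'' glosses over the same scaling bookkeeping the paper glosses over when normalizing $R=1$: the $L^\infty$ bound on $u^-$ and the rescaled $L^1$-norm in \eqref{ho1} produce factors of $R^{-n/2}$, so what one actually obtains has the scale-invariant quantity $R^{-n/2}\|u\|_{L^2(B_{2R}(x))}+R^{1-n/2}\|Du\|_{L^2(B_{2R}(x))}$ (or a constant depending on a lower bound for $R$) on the right-hand side; this imprecision is inherited from the statement of \eqref{holder} itself and affects both proofs equally. Second, your proof is only as strong as the citation \eqref{ho1} for merely locally Lipschitz $f\ge-K$ and $C^2$ stable solutions; since the paper itself quotes this from \cite{c22,c23}, it is a legitimate black box here, but be aware that it carries the whole $n\le9$ stability analysis that the paper deliberately redoes.
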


Thanks to Theorem \ref{th1}, if
$f$ is nondecreasing and convex additionally, we
obtain the interior H\"older regularity of $W^{1,2}$-stable solution
for $n\le 9$ by applying the argument of \cite[Theorem 1.2]{pzz22}; we also refer to
  \cite[Proposition 4.2]{cfrs} under the assumption $f\ge 0$.
\begin{thm}\label{th2}
Let $2\le n\le9$ and let $f\in {\rm Lip}_{\loc}(\rr)$ satisfy \eqref{as-f}. Suppose that  $f$ is convex and nondecreasing. If $u\in W^{1,2}(\Omega)$ is a stable solution
to equation \eqref{eq}, then \eqref{holder} also holds.
\end{thm}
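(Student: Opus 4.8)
The plan is to deduce Theorem~\ref{th2} from the a priori estimate in Theorem~\ref{th1} by an approximation argument, following the scheme of \cite[Theorem 1.2]{pzz22} (and, under $f\ge0$, \cite[Proposition 4.2]{cfrs}). The main point is that Theorem~\ref{th1} already gives the oscillation decay \eqref{holder} for $C^2$-stable solutions, so the only thing to establish is that a $W^{1,2}$-stable solution can be approximated, in a strong enough sense, by $C^2$-stable solutions to equations with smooth truncated nonlinearities, so that \eqref{holder} passes to the limit. Here the hypotheses that $f$ is convex and nondecreasing are exactly what make the approximation scheme work: monotonicity gives comparison/ordering of approximants, and convexity is the mechanism that preserves stability under the natural truncations.

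The steps I would carry out are as follows. First, fix $x\in\Omega$ and $R$ with $0<R<\frac14\dist(x,\partial\Omega)$, and work on the ball $B_{2R}(x)$. Second, construct a sequence $f_k\in C^\infty(\rr)$ of nondecreasing convex functions with $f_k\le f$, $f_k\uparrow f$ locally uniformly, and each $f_k$ bounded (or at least globally Lipschitz), so that each $f_k$ still satisfies \eqref{as-f} with the same constants $A,K$ (this is where one uses that truncating a convex nondecreasing function from above by a linear function keeps it convex and nondecreasing, and that $f_k\ge f-\text{const}$ is automatic once $f_k$ is close to $f$ on the relevant range; the uniform lower bound \eqref{as-f} is inherited because $f_k\le f$ does not hurt the \emph{lower} bound and one arranges $f_k\ge A\min\{0,t\}-K$ directly). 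Third, solve the Dirichlet problems $-\bdz u_k=f_k(u_k)$ in $B_{2R}(x)$ with $u_k=u$ on $\partial B_{2R}(x)$ via monotone iteration (using $f_k$ nondecreasing and globally Lipschitz), obtaining $C^2$ solutions $u_k$; by the convexity/monotonicity one shows the $u_k$ are the \emph{minimal} solutions and hence stable, so Theorem~\ref{th1} applies to each $u_k$ on $B_R(x)$ and gives
\begin{align}\label{uk-holder}
\underset{B_r(x)}{\rm osc}\ u_k\le C\left(\frac rR\right)^{\alpha}\big(\|u_k\|_{W^{1,2}(B_{2R}(x))}+1\big),\quad 0<r<R.
\end{align}
Fourth, establish $W^{1,2}$ and $L^1$ bounds on $u_k$ uniform in $k$: since $f_k\le f$ and $f_k$ is monotone one gets $u_k\le u$ (comparison), while the lower bound \eqref{as-f} for $f_k$ together with the stability inequality and testing the equation against $u_k-u$ bounds $\|u_k\|_{W^{1,2}(B_{2R}(x))}$ by $\|u\|_{W^{1,2}(B_{2R}(x))}+C(n,A,K)$; the linear term $A\min\{0,t\}$ is handled exactly as in \cite{pzz22} using that $A\ge0$ is subcritical against the first Dirichlet eigenvalue after rescaling, or more simply by absorbing it with the stability inequality. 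Fifth, pass to the limit: the uniform bound \eqref{uk-holder} plus Arzel\`a--Ascoli gives $u_k\to v$ in $C^{0,\alpha'}_{\loc}(B_R(x))$ for some limit $v$, and standard elliptic estimates upgrade this so that $v$ solves $-\bdz v=f(v)$; one then identifies $v=u$ on $B_R(x)$ (uniqueness of the stable/minimal solution, or monotone convergence $u_k\uparrow u$ from below together with the comparison $u_k\le u$), and \eqref{holder} for $u$ follows by letting $r$ range and using the uniform $W^{1,2}$ bound from Step four.

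The main obstacle I expect is the control of stability under approximation and the accompanying uniform $W^{1,2}$ estimate in the presence of the \emph{negative} linear lower bound term $A\min\{0,t\}$, rather than the genuinely nonnegative right-hand sides treated in \cite{cfrs}. One must be careful that: (i) the minimal solutions $u_k$ are genuinely stable (which needs the equation to admit a subsolution/supersolution sandwich, using $f$ convex nondecreasing and the given boundary data; here the hypothesis that $u$ itself is a $W^{1,2}$-stable solution provides the supersolution side after comparison); and (ii) the estimate on $\|u_k\|_{W^{1,2}(B_{2R}(x))}$ does not degenerate — the term $-Au_k^-$ on the right could a priori push $\|u_k\|_{W^{1,2}}$ up, but it is controlled because $A\ge0$ means $-A\,u_k^-\le 0$ contributes with the right sign in the energy identity, and the leftover $+K$ is absorbed by the constant. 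Everything else (monotone iteration, Schauder bootstrap, Arzel\`a--Ascoli, passing oscillation bounds to the limit) is routine once these two points are secured, and the construction of $f_k$ is elementary. I would cite \cite[Theorem 1.2]{pzz22} for the overall mechanism and only spell out the modifications forced by \eqref{as-f}.
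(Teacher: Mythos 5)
Your overall strategy is the same as the paper's: truncate $f$ so that it becomes globally Lipschitz while remaining convex and nondecreasing, build $C^2$ minimal (hence stable) solutions of the truncated problems with boundary datum $u$, apply Theorem \ref{th1} to them with constants depending only on $n,A,K$, and pass to the limit, delegating the mechanism to \cite{cfrs,pzz22}. (A small slip: a convex function on $\rr$ that is bounded above is constant, so your ``$f_k$ bounded'' option is unavailable; the globally Lipschitz linear truncation $f_\ez$ is the right choice, and it does inherit \eqref{as-f}.)

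The genuine gap is in your Step three, namely the \emph{existence} of the approximating solutions $u_k$. Monotone iteration requires a subsolution/supersolution pair; you correctly identify $u$ as the supersolution (since $f_k\le f$ and $f_k$ is nondecreasing), but you never produce a subsolution, and this is precisely the non-routine point under \eqref{as-f}. Since $f$ is only bounded below by $A\min\{0,t\}-K$ with possibly $A>0$, the natural lower barrier is the solution of a Dirichlet problem with a zeroth-order term of size up to $A$, and such a barrier below $u$ exists (via the maximum principle and \cite[Lemma 3.1]{pzz22}) only when that coefficient lies strictly below the first Dirichlet eigenvalue of the ball; on your ball $B_{2R}(x)$ with arbitrary $R<\frac14\dist(x,\partial\Omega)$ this can fail, and then the downward iteration from $u$ has no reason to stay bounded. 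The paper resolves this by first proving, from convexity, monotonicity and \eqref{as-f}, the global linear bound $f(t)\ge\bar At-K$ with $\bar A:=\min\{f'_-(0),A\}\in[0,A]$, then choosing a small radius $r_0=r_0(A,K,\Omega)$ so that $\bar A$ stays below the first eigenvalue of $B_{r_0}(x_0)$, solving the linear problem $-\bdz v=\bar Av-K$ in $B_{r_0}(x_0)$ with $v=u$ on $\partial B_{r_0}(x_0)$ to obtain the subsolution $v\le u$, and only then running the iteration of \cite{cfrs,pzz22} between $v$ and $u$; accordingly the approximation and the resulting oscillation estimate are obtained on balls of radius at most $r_0$ (the paper's final display has $0<r<R<\frac14 r_0$), not directly on $B_{2R}(x)$. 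Your remark that $A$ is ``subcritical against the first Dirichlet eigenvalue after rescaling'' gestures at exactly this point, but it is placed in the energy-estimate step and does not repair Step three: without the small-ball subsolution the minimal solutions $u_k$ need not exist, and once you add it your construction becomes the paper's. The remaining ingredients you list (stability of minimal solutions via convexity, uniform $W^{1,2}$ bounds, identification of the limit with $u$) are indeed handled by citing \cite{pzz22,cfrs}, as the paper also does.
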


\begin{rem}\label{rem}
\rm
Recall that, we say that $u
:\Omega\to \rr$ is a
$L^1_{\loc}$-weak solution to equation \eqref{eq} if $u\in L^1_{\loc}(\Omega)$, $f(u)\in L^1_{\loc}(\Omega)$ and
\begin{equation*}
-\int_{\Omega}u \bdz\xi\,dx=\int_{\Omega}f(u)\xi\,dx\quad \forall \xi\in C^\fz_c(\Omega).
\end{equation*}
A $L^1_{\loc}(\Omega)$-weak solution $u$ to \eqref{eq} is  called as  a $L^1_{\loc}$-stable solution  if $u$ satisfies \eqref{st} for $f\in {\rm Lip}_{\loc}(\rr)$.
When dimension $n\ge 3$, Theorem \ref{th2} is not correct for some $L^1_{\loc}$-stable solutions.
Indeed, Brezis and V\'azquez \cite{bv97} showed that,
when $n\ge 3$, $|x|^{-2/(p-1)}-1$ is a $L^1_{\loc}$-stable
solution to equation
$$-\bdz v=\frac 2{p-1}\left(n-\frac {2p}{p-1}\right)(1+v)^p\quad{\rm in}\quad B_1\
\ {\rm whenever}\ \frac {n}{n-2}<p\le \frac{n+2\sqrt{n-1}}{n-4+2\sqrt{n-1}},$$
while $|x|^{-2/(p-1)}-1$ does not belong to $L^\fz(B_1)$ and $W^{1,2}(B_1)$.
When dimension $n=2$, Villegas \cite[Theorem 1.3]{v16} showed that all the
 $L^1(B_1)$-radial stable solutions are bounded for any
 nonlinearity $f\in C^1(\rr)$. However, in the nonradial setting, it remains unclear whether  planar
 $L^1_{\loc}(\Omega)$-stable solutions are bounded for  every
 nonlinearity $f$.

\end{rem}

In Theorem \ref{th2}, we need to approximate the $W^{1,2}$-stable solution via
$C^2$-stable solution. First, we consider the  Dirichlet problem $-\bdz v=\bar Av-K$ in
$B_{r_0}(x_0)\Subset \Omega$ with $v=u$ on $\partial B_{r_0}(x_0)$ where $\bar A=\min\{f'_{-}(0),A\}$ and
$r_0$ depend on $n$, ${\rm dist}(x_0,\partial \Omega)$ and
$A$. Then one can adapt arguments for \cite{cfrs,pzz22} to
build $C^2$-stable solution $u^{\ez}$ to $-\bdz u^{\ez}=f_{\ez}(u^{\ez})$ in $B_{r_0}(x_0)$ with
$u^{\ez}=u$ on $\partial B_{r_0}(x_0)$.

Let $2\le n\le 9$. Since Theorem \ref{th2} follows from Theorem \ref{th1} by applying the
arguments for \cite{cfrs,pzz22}, the key is to
prove Theorem \ref{th1}. By a covering and scaling argument, it suffices to prove \begin{align}\label{du-mo}
\int_{B_{\rho}}(Du\cdot x)^2|x|^{-n}\,dx\le C\rho^{2\az}, \quad
\ \ \forall 0<\rho<1/2,
\end{align}
for some constant $C=C(A,K,n)(1+\|u\|^2_{W^{1,2}(B_1)})$, which, together with the  Morrey's estimates for
radial derivatives in \cite{cfrs,c22}, gives Theorem \ref{th1}.

Towards \eqref{du-mo}, we have to improve the compactness argument used by
Cabr\'e, Figalli, Ros-Oton and Serra \cite{cfrs} when $f\ge 0$. To illustrate our
 improvement clearly, we first summarize their original idea as follows:
\begin{enumerate}
\item[$\bullet$] {\bf Step 1.} First, by choosing a  test function
$\xi=(x\cdot Du)|x|^{-(n-2)/2}\eta$
in stability inequality \eqref{st} for some smooth cut-off function $\eta$, they
showed the crucial inequality \eqref{keq-2}; see Lemma \ref{key-le} in the appendix also in
\cite[Lemma 2.1]{cfrs}.
Note that the inequality \eqref{keq-2} holds for all $f\in {\rm Lip}_{\loc}(\rr)$.

\item[$\bullet$] {\bf Step 2.} When $f\ge 0$, they employed a compactness
argument to control the  $L^2$-norm  of $Du$ by the $L^2$-norm of radial derivative of $u$ in an annulus; see \cite[Lemma 3.1]{cfrs}.
The crucial fact here is that the proof of \cite[Lemma 3.1]{cfrs}
 $f\ge0$ is needed.

\item[$\bullet$] {\bf Step 3.} Thanks to \cite[Lemmas 2.1 and 3.1]{cfrs}, by a suitable
iteration  result(see \cite[Lemma 3.2]{cfrs}) one concludes \eqref{du-mo}.
\end{enumerate}

In Step 2 above, note that $f\ge 0$ is fully used.
Indeed, the compactness argument as in \cite[Lemma 3.1]{cfrs} is
based on the $W^{1,1}\cap L^{2+\gz}$-estimates of $Du$
and the non-existence of nontrivial 0-homogeneous for superharmonic functions.
More precisely, given a sequence of stable solution $u_k$ to
$-\bdz u_k=f_k(u_k)$,  by normalizing one may
assume that $\|Du_k\|_{L^2(B_1)}=1$. Then, this leads to the uniform
$W^{2,1}\cap W^{1,2+\gz}$-estimates of $u_k$ by a doubling assumption
\eqref{dou} on $Du_k$ and a priori $W^{1,1}\cap L^{2+\gz}$-estimates of
$Du_k$. However, under the assumption
\eqref{as-f}, $f$ is not necessarily nonnegative. Thus we can
not simply follow their argument to get same result as in \cite[Lemma 3.1]{cfrs}.
In particular, when $f_k$ is not nonnegative, $u_k$ is not necessarily
in $W^{1,2+\gz}\cap W^{2,1}$ uniformly in $k$ and the limiting function
of $u_k$ is also not necessarily superharmonic. Hence we need some new ideas
to overcome this difficulty.

Under $f(t)\ge A\min\{0,t\}-K$, we overcome the  difficulty above via a new version of
 \cite[Lemma 3.1]{cfrs}; see Lemma \ref{key-it}. To be more precise, by introducing a lower bound assumption
\eqref{icd} on gradient of $u$, combined with a priori
$W^{1,1}\cap L^{2n/(n-1)}$-estimates of $Du$ (see Lemma \ref{sob}) and   a doubling assumption \eqref{dou} on $Du$, we obtain
the uniform $W^{1,2+\gz}\cap W^{2,1}$-
estimates of $u_k$. In particular, the assumption
\eqref{icd} also yields the superharmonic property for the
limiting function of $u_k$ (noting that $u_k$ is not necessarily superharmonic).
Then applied a compactness argument of \cite{cfrs} to $u_k$ as desired.

Moreover,  since a lower bound assumption
\eqref{icd} on $Du$ is needed in Lemma \ref{key-it}, the iteration  result as in
\cite[Lemma 3.2]{cfrs} can not directly used in proving \eqref{du-mo}.
To this end, we  built a new version of \cite[Lemma 3.2]{cfrs} which is based on
Lemma \ref{key-it};
see Lemma \ref{it}.

The paper is organized as follows. Some a priori estimates on $u$ is established in Section 2, which can be used in proof of the key lemmas.
 In Section 3, we show the key Lemmas
 \ref{key-it} and \ref{it}.
 Finally, we prove Theorems \ref{th1} and \ref{th2} in Section 4.
\begin{rem}
\rm
(i) When $f\ge -K$, Cabr\'e \cite{c22}
provided a quantitative proof to control the $L^2$-norm of $Du$ by the the $L^2$-norm of radial derivative of $u$ and $L^1$-norm of constant $K$ without a doubling assumption \eqref{dou} on $Du$.

(ii) The improved compactness argument in this paper can be extended the
 nonlinear equations $-\bdz_p u=f(u)$ involving $p$-Laplacian when
 $f(t)\ge A\min\{0,t\}-K$ for proving analogues results. We will work on this in forthcoming for
 more general lower bound on $f$. Recall that, when
 $f\ge 0$, the interior H\"older estimates of stable solutions to
 $-\bdz_p u=f(u)$ is established by  Cabr\'e-Miraglio-Sanchon
 \cite{cms} for the optimal dimension  $n<p+4p/(p-1)$ when $p>2$ and
 $n<5p$ when $p\in (1,2)$. This proof is based on
 the compactness argument of \cite{cfrs}.
\end{rem}

\section{Some a priori Sobolev and Morrey estimates}
In this section we provide some a priori estimates for $C^2$-stable solutions $u$
to equation $-\bdz u=f(u)$, which will be used in proving the key lemmas. We first show a Morrey bound of $u$ for all $f\in {\rm Lip}_{\loc}(\rr)$.
\begin{lem}\label{mo}
Let $3\le n\le 9$, $f\in {\rm Lip}_{\loc}(\rr)$, and let $u\in C^2(B_2)$ be a stable solution to
$-\bdz u=f(u)$ in $B_2$. Then for all $0<r<1$ and for all $p>2$ we have
\begin{align}\label{mo-1}
r^{-\frac{n(p-2)}{p}}\int_{B_r}u^2\,dx
\le C(n,p)\int_{B_2}(u^2+|Du|^2)\,dx.
\end{align}
\end{lem}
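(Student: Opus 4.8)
The plan is to exploit the stability inequality \eqref{st} to get an $L^2$-estimate on $u$ and then bootstrap via the equation. The key observation is that, although $f$ has no sign, the stability inequality forces $f'_-(u)$ to be bounded above in an averaged sense, and this is the only structural input we need. First I would test \eqref{st} with a cutoff $\xi=\eta$ (independent of $u$) supported on $B_{3/2}$ and equal to $1$ on $B_r$, with $|D\eta|\le C/r$ on the annulus scale; but since $r$ is arbitrary we really want to localize at scale $r$, so instead I would take $\eta$ supported on $B_{2r}$ (assuming $2r<2$, i.e. handle $r\ge 1/2$ separately by the trivial inclusion $B_r\subset B_2$ and \eqref{mo-1} with a worse constant). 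This gives $\int_{B_r}f'_-(u)\,dx\le C r^{n-2}$. This by itself does not control $u$; instead the standard device (going back to Cabré–Capella, Cabré–Figalli–Ros-Oton–Serra) is to test \eqref{st} with $\xi=w\eta$ where $w$ is a well-chosen power of a positive solution, but here I will follow the simpler route used for the $L^2$-Morrey bound.

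The cleanest approach: let me test \eqref{st} with $\xi=u\eta$ on a ball $B_R$ with $r<R<2$, obtaining
\begin{align*}
\int_{B_R}f'_-(u)u^2\eta^2\,dx\le \int_{B_R}|D(u\eta)|^2\,dx=\int_{B_R}|Du|^2\eta^2+2u\eta\,Du\cdot D\eta+u^2|D\eta|^2\,dx.
\end{align*}
On the other hand, multiplying the equation $-\Delta u=f(u)$ by $u\eta^2$ and integrating by parts yields
\begin{align*}
\int_{B_R}|Du|^2\eta^2\,dx=\int_{B_R}f(u)u\eta^2\,dx-2\int_{B_R}u\eta\,Du\cdot D\eta\,dx.
\end{align*}
The idea is now to combine these two identities so that the gradient term cancels; what survives is a comparison between $\int f'_-(u)u^2\eta^2$ and $\int f(u)u\eta^2$ plus lower-order terms in $u^2|D\eta|^2$ and the cross terms. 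Using $f'_-(u)u^2\ge f(u)u-Cu^2$ (a consequence of local Lipschitz-ness together with \eqref{as-f}? — no, more simply: convexity is NOT assumed here, so I should instead avoid needing such an inequality). The honest route, the one actually in \cite{cfrs} and \cite{c22} for this exact lemma, is: since no sign and no convexity is used, one only needs the equation plus a Caccioppoli-type estimate and the stability inequality is used merely to absorb a bad term. Concretely I expect the proof to run: (1) test \eqref{st} with $\xi=u\eta$ to bound $\int f'_-(u)u^2\eta^2$ by gradient and $L^2$ terms; (2) test the equation with $u\eta^2$ to express $\int|Du|^2\eta^2$; (3) since we have no control on $\int f(u)u\eta^2$ directly, use instead the \emph{superharmonicity-free} argument: estimate $\int_{B_r}u^2$ directly via the equation by testing with a suitable non-negative test function and Morrey iteration on the quantity $\phi(R):=\int_{B_R}(u^2+|Du|^2)$.

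Concretely, the heart of the matter is a Caccioppoli inequality $\int_{B_{R/2}}|Du|^2\le \frac{C}{R^2}\int_{B_R\setminus B_{R/2}}u^2+\int_{B_R}|f(u)u|$, combined with the stability inequality to kill $\int|f(u)u|$-type contributions after writing $f(u)u=f'_-(\cdot)(\cdots)$... Given that the paper presents this as a preliminary lemma and flags only that "$f\in\mathrm{Lip}_{\loc}$" is enough (so in particular no sign of $f$ used), I would expect the actual proof to be a short Morrey/Campanato iteration: set $\phi(R)=\int_{B_R}(u^2+|Du|^2)\,dx$; show $\phi(R/2)\le \theta\,\phi(R)+C(R)\cdot(\text{boundary data})$ with $\theta<1$ coming from stability, for $3\le n\le 9$; then iterate to get $\int_{B_r}u^2\le C r^{n(p-2)/p}\phi(2)$ for every $p>2$, the exponent $n(p-2)/p<n$ being exactly what an $L^p$-type Morrey decay gives. \textbf{The main obstacle} I anticipate is making the iteration step produce a decay rate that works for \emph{every} $p>2$ simultaneously with a dimensional constant — this is where $n\le 9$ must enter, presumably through the sharp constant in the stability inequality (the Hardy-type constant $(n-2)^2/4$ versus the exponent one can afford), so the delicate point is tracking that constant precisely rather than the structure of $f$.
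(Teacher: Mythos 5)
Your proposal does not reach a proof: both concrete routes you sketch are either abandoned by you (the $\xi=u\eta$ computation, which you correctly note would require an inequality like $f'_-(u)u^2\ge f(u)u-Cu^2$ that is unavailable without convexity) or left entirely unspecified (the final ``Morrey/Campanato iteration'' on $\phi(R)=\int_{B_R}(u^2+|Du|^2)$, for which you give no mechanism producing the decay factor $\theta<1$). The missing key idea is the one the paper actually uses: test the stability inequality \eqref{st} with $\xi=(x\cdot Du)|x|^{-(n-2)/2}\eta$, which yields the inequality \eqref{keq-1} of Lemma \ref{key-le}, and then choose $\eta$ equal to $r^{-(n-2)/2}$ on $B_r$ and $|x|^{-(n-2)/2}\phi$ outside, with $\phi$ a fixed cutoff equal to $1$ on $B_{3/2}$. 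Substituting this $\eta$ produces on the left-hand side the two terms $(n-2)\,r^{-n+2}\int_{B_r}|Du|^2\,dx$ and $\frac{(n-2)(10-n)}{4}\int_{B_2\setminus B_r}(Du\cdot x)^2|x|^{-n}\phi^2\,dx$, and the whole point of $3\le n\le 9$ is that the coefficient $(n-2)(10-n)/4$ is nonnegative, so the second term can be dropped; the right-hand side lives where $D\phi\neq0$ and is bounded by $C(n)\int_{B_2}|Du|^2\,dx$. This gives the scale-invariant Morrey decay $r^{-n+2}\int_{B_r}|Du|^2\,dx\le C(n)\int_{B_2}|Du|^2\,dx$ for all $0<r\le1$. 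Your guess that $n\le9$ enters through a Hardy-type constant in the stability inequality is in the right spirit, but without this explicit test-function computation there is no decay estimate at all.

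The second gap is how ``every $p>2$'' is handled. In the paper this is not a delicate iteration issue: the gradient Morrey decay plus the Poincar\'e inequality gives $\|u\|_{{\rm BMO}(B_1)}\le C(n)\|Du\|_{L^2(B_2)}$, John--Nirenberg then gives $u\in L^p(B_1)$ with $\int_{B_1}|u|^p\,dx\le C(n,p)\|u\|^p_{W^{1,2}(B_2)}$ for every $p$, and \eqref{mo-1} follows from a single application of H\"older's inequality, $\int_{B_r}u^2\,dx\le\bigl(\int_{B_r}|u|^p\,dx\bigr)^{2/p}\,C(n)\,r^{n(p-2)/p}$. So the exponent $n(p-2)/p$ is purely an artifact of H\"older once $u$ is in $L^p$, not the output of a $p$-dependent decay scheme; the obstacle you flag (tracking a sharp constant to get decay simultaneously for all $p$) is not where the difficulty lies, and your proposal as written would not produce \eqref{mo-1}.
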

It is not hard to prove the Lemma \ref{mo} from the BMO estimate of
$u$ due to \cite{cfrs}(see also \cite{pzz22}). For the readers of convenience we
provide the proof in the appendix.

The next lemma gives a priori $W^{1,\frac{2n}{n-1}}$ and $W^{2,1}$-
estimate of $u$ which relies on the argument of \cite{cfrs} and Sobolev inequality.

\begin{lem}\label{sob}
Let $n\ge 2$ and let $f\in {\rm Lip}_{\loc}(\rr)$ satisfy \eqref{as-f}.
Suppose that $u\in C^2(B_{r})$ is a stable solution to
$-\bdz u=f(u)$ in
$B_r$. For all
$\eta\in C^\fz_c(B_{r})$, we have
\begin{align}\label{w1n}
\left(\int_{B_{r}}(|Du|^2\eta^2)^{\frac{n}{n-1}}\,dx\right)^{\frac{n-1}{n}}&\le C(n)
\left(\int_{B_{r}}|Du|^2|D\eta|^2
\,dx\right)^{\frac12}\left(\int_{B_{r}}|Du|^2\eta^2\,dx\right)^{\frac12}\nonumber\\
&\quad +2\int_{B_{r}}(A|u|+K)|Du|\eta^2\,dx
\end{align}
and
\begin{align}\label{w21}
\int_{B_{r}}|D^2u|\eta^2\,dx&\le C(n)
\left(\int_{B_{r}}|Du|^2|D\eta|^2
\,dx\right)^{\frac12}\left(\int_{B_{r}}\eta^2\,dx\right)^{\frac12} +2
\int_{B_{r}}(A|u|+K)\eta^2\,dx.
\end{align}
\end{lem}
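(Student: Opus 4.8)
The plan is to derive both inequalities from (i) the stability inequality \eqref{st} together with a Bochner identity, (ii) an elementary pointwise estimate for $|D^2u|$ in terms of the ``defect'' $Q:=|D^2u|^2-\big|D|Du|\big|^2\ (\ge 0$ by Kato's inequality), and (iii) the one place where hypothesis \eqref{as-f} is used, namely $-\Delta u=f(u)\ge A\min\{0,u\}-K\ge-(A|u|+K)$, i.e.\ $(\Delta u)^+\le A|u|+K$ pointwise. First, since $f\in{\rm Lip}_{\loc}(\rr)$ and $u\in C^2$, the right side of $\Delta u=-f(u)$ is locally Lipschitz, so by interior elliptic regularity $u\in W^{3,p}_{\loc}$ for all $p<\infty$ and $-\Delta(\partial_i u)=f'_-(u)\,\partial_i u$ a.e. Testing \eqref{st} with $\xi=|Du|\eta$ (legitimate for $W^{1,2}$ functions of compact support by density, as $f'_-(u)\in L^\infty_{\loc}$), using the identity $D^2u\,Du=|Du|\,D|Du|$, expanding $\big|D(|Du|\eta)\big|^2$ and cancelling the common cross term, one obtains as in \cite{cfrs}
\begin{equation*}
\int_{B_r}Q\,\eta^2\,dx\ \le\ \int_{B_r}|Du|^2|D\eta|^2\,dx.
\end{equation*}

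For step (ii), at a point with $Du\ne0$ put $\nu=Du/|Du|$ and choose an orthonormal frame $\{e_1,\dots,e_{n-1},\nu\}$; writing $u_{\nu\nu}=\langle D^2u\,\nu,\nu\rangle$ and similarly $u_{\nu e_i},u_{e_ie_j}$, one has $D|Du|=D^2u\,\nu$, hence $\big|D|Du|\big|^2=\sum_i u_{\nu e_i}^2+u_{\nu\nu}^2$ and $Q=\sum_i u_{\nu e_i}^2+\sum_{i,j}u_{e_ie_j}^2$. Therefore $\big|D|Du|\big|\le\sqrt Q+|u_{\nu\nu}|$, while $u_{\nu\nu}=\Delta u-\sum_i u_{e_ie_i}$ and Cauchy--Schwarz give $|u_{\nu\nu}|\le|\Delta u|+\sqrt{n-1}\,\sqrt Q$. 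Since $|D^2u|^2=Q+\big|D|Du|\big|^2$, this yields the pointwise inequalities (trivial where $Du=0$, since then $D^2u=0$ and $D|Du|=0$ a.e.)
\begin{equation*}
|D^2u|\le(2+\sqrt{n-1})\sqrt Q+|\Delta u|,\qquad |Du|\,\big|D|Du|\big|\le|Du|\sqrt Q+\big|Du\cdot D|Du|\big|.
\end{equation*}

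To prove \eqref{w21} I integrate the first bound against $\eta^2$: the $\sqrt Q$ term is absorbed via $\big(\int Q\eta^2\big)^{1/2}\big(\int\eta^2\big)^{1/2}$ and step (i), and the Laplacian term is handled by writing $|\Delta u|=2(\Delta u)^+-\Delta u$, using $(\Delta u)^+\le A|u|+K$ and $\int_{B_r}\Delta u\,\eta^2\,dx=-2\int_{B_r}\eta\,Du\cdot D\eta\,dx$; this gives exactly \eqref{w21} with $C(n)=4+\sqrt{n-1}$. For \eqref{w1n}, since $u\in C^2$ one applies the Sobolev inequality $W^{1,1}\hookrightarrow L^{n/(n-1)}$ to $g:=|Du|^2\eta^2\in C^1_c(B_r)$, with $Dg=2|Du|(D|Du|)\eta^2+2|Du|^2\eta\,D\eta$; the $|Du|\sqrt Q$ contribution is treated as before, and for $b:=Du\cdot D|Du|=|Du|\,u_{\nu\nu}$ one writes $|b|=2b^+-b$, bounding $b^+\le|Du|\big((\Delta u)^++\sqrt{n-1}\sqrt Q\big)\le(A|u|+K)|Du|+\sqrt{n-1}\,|Du|\sqrt Q$ from $(\Delta u)^+\le A|u|+K$, and computing by integration by parts (using $\Delta u=-f(u)$)
\begin{equation*}
\int_{B_r}b\,\eta^2\,dx=\int_{B_r}f(u)|Du|\eta^2\,dx-2\int_{B_r}|Du|\eta\,Du\cdot D\eta\,dx,
\end{equation*}
whose negative is $\le\int_{B_r}(A|u|+K)|Du|\eta^2\,dx+2\big(\int|Du|^2|D\eta|^2\big)^{1/2}\big(\int|Du|^2\eta^2\big)^{1/2}$ since $-f(u)\le A|u|+K$. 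Collecting terms (with the usual bookkeeping of the numerical constants) gives \eqref{w1n}.

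The scheme above is that of \cite{cfrs}, where the assumption $f\ge0$ makes $\Delta u\le0$, so $|\Delta u|=-\Delta u$ and the ``normal second derivative'' terms are automatically favourably signed. The essential new difficulty under the weaker \eqref{as-f} is that $\int_{B_r}|\Delta u|\eta^2\,dx$ and $\int_{B_r}(Du\cdot D|Du|)\eta^2\,dx$ are genuinely indefinite; controlling them requires simultaneously exploiting the \emph{one-sided} bound $(\Delta u)^+\le A|u|+K$ and the cancellation obtained by integrating by parts against $\eta^2$ — precisely the role of the splittings $|\Delta u|=2(\Delta u)^+-\Delta u$ and $|b|=2b^+-b$ above. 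This is the step I expect to be the main obstacle.
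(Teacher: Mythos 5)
Your argument is correct and is essentially the paper's own proof: both rest on the Sternberg--Zumbrun/stability estimate $\int_{B_r}\bigl(|D^2u|^2-\bigl|D|Du|\bigr|^2\bigr)\eta^2\,dx\le\int_{B_r}|Du|^2|D\eta|^2\,dx$, a pointwise bound $|D^2u|\le C(n)\bigl(|D^2u|^2-\bigl|D|Du|\bigr|^2\bigr)^{1/2}+|\Delta u|$ (which you derive by a frame computation where the paper cites \cite[Lemma 1.5]{pzz22}), the use of \eqref{as-f} only through $(\Delta u)^+\le A|u|+K$ combined with integration by parts against $\eta^2$ (your treatment of $b=Du\cdot D|Du|$ is the paper's ${\rm div}(|Du|Du)$ identity in disguise), and the Sobolev embedding $W^{1,1}\hookrightarrow L^{n/(n-1)}$ applied to $|Du|^2\eta^2$. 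The only quibble is that your bookkeeping for \eqref{w1n} produces a dimensional constant $C(n)$ rather than the literal factor $2$ in front of $\int_{B_r}(A|u|+K)|Du|\eta^2\,dx$; the paper's own proof has the same feature, and only the dependence $C(n,A,K)$ matters in the later applications, so this is harmless.
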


\begin{proof}[Proof of Lemma \ref{sob}]
We first show \eqref{w21}. By triangle inequality, we have
\begin{align}\label{so-1}
&|D^2u|\le \left|D^2u-\frac{\bdz_\fz u}{|Du|^2}\frac{Du}{|Du|}
\otimes\frac{Du}{|Du|}\right|+\left|\frac{\bdz_\fz u}{|Du|^2}-\bdz u\right|+|\bdz u|\quad
{\rm a.e.\ in}\ B_r,
\end{align}
where $\otimes$ stands for tensor product, that is, $a\otimes b=(a_ib_j)_{1\le i,j\le n}$ for
all $a,b\in \rr^n$.
Observe that, the first term and the second term in the right-hand side of the inequality
\eqref{so-1} can be bounded by $C(n)[|D^2u|^2-|D|Du||^2]^{\frac 12}$ a.e. in $\rr^n$
(see for example \cite[Lemma 1.5]{pzz22}); the last term in the right-hand side of the inequality \eqref{so-1} can be estimated as
$$|\bdz u|\le -\bdz u+2(A|\min\{0,u\}|+K)\le -\bdz u+2(A|u|+K)$$
due to \eqref{as-f}. Therefore, \eqref{so-1} becomes
\begin{align}\label{so-2}
&|D^2u|\le C(n)[|D^2u|^2-|D|Du||^2]^{\frac 12}
-\bdz u+2(A|u|+K)\quad
{\rm a.e.\ in}\ B_r.
\end{align}
Multiplying both sides by a test function $\eta^2$ with
$\eta\in C^\fz_c(B_r)$ one has
\begin{align}\label{so-3}
&\int_{B_{r}}|D^2u|\eta^2\,dx\nonumber\\
&\le C(n)\int_{B_{r}}[|D^2u|^2-|D|Du||^2]^{\frac 12}\eta^2\,dx
-\int_{B_{\rho}}\bdz u\eta^2\,dx+2\int_{B_{\rho}}|Au-K|\eta^2\,dx.
\end{align}
Recalling that the following inequality due to Sternberg and Zumbrun \cite{sz}
\begin{align}\label{so-st}
\int_{B_{r}}[|D^2u|^2-|D|Du||^2]\eta^2\,dx\le \int_{B_r}
|Du|^2|D\eta|^2\,dx.
\end{align}
By H\"older inequality, we get
\begin{align}\label{so-4}
\int_{B_{r}}[|D^2u|^2-|D|Du||^2]^{\frac 12}\eta^2\,dx
&\le \left(\int_{B_{r}}[|D^2u|^2-|D|Du||^2]\eta^2\,dx\right)^{\frac 12}
 \left(\int_{B_{r}}\eta^2\,dx\right)^{\frac 12}\nonumber\\
 &\le \left(\int_{B_{r}}|Du|^2|D\eta|^2\,dx\right)^{\frac 12}
 \left(\int_{B_{r}}\eta^2\,dx\right)^{\frac 12}.
\end{align}
Also, via integration by parts and H\"older inequality again, we obtain
\begin{align}\label{so-5}
-\int_{B_{r}}\bdz u\eta^2\,dx=-2\int_{B_r} Du\cdot D\eta \eta\,dx
\le
\left(\int_{B_{r}}|Du|^2|D\eta|^2\,dx\right)^{\frac 12}
 \left(\int_{B_{r}}\eta^2\,dx\right)^{\frac 12}.
\end{align}
Now, inserting \eqref{so-3}, \eqref{so-4} and \eqref{so-5}, this proves
\eqref{w21}.

To get \eqref{w1n}, we claim that
\begin{align}\label{wcl}
&\int_{B_{r}}|D^2u||Du|\eta^2\,dx\nonumber\\
&\le C(n)
\left(\int_{B_{r}}|Du|^2|D\eta|^2
\,dx\right)^{\frac12}\left(\int_{B_{r}}|Du|^2\eta^2\,dx\right)^{\frac12}
+2\int_{B_{r}}|Au-K||Du|\eta^2\,dx.
\end{align}
Assume that this holds for the moment. For all $\eta\in C^\fz_c(B_{r})$, a Sobolev inequality \cite[Theorem 7.10]{gt83} gives us that
\begin{align*}
\left(\int_{B_{r}}(|Du|^2\eta^2)^{\frac{n}{n-1}}\,dx\right)^{\frac{n-1}{n}}&\le
\int_{B_{r}}|D(|Du|^2\eta^2)|\,dx,
\end{align*}
which further yields
\begin{align*}
\left(\int_{B_{r}}(|Du|^2\eta^2)^{\frac{n}{n-1}}\,dx\right)^{\frac{n-1}{n}}\le
 4\int_{B_{r}}|D^2u||Du|\eta^2\,dx+2\int_{B_{r}}|\eta||D\eta||Du|^2\,dx.
\end{align*}
From this, \eqref{w1n} follows from \eqref{wcl} and H\"older inequality with  the last term in the right-hand side of \eqref{wcl}.

To this end, multiplying both sides in \eqref{so-2} by $|Du|\eta^2$ one has
\begin{align}\label{so-6}
&\int_{B_{r}}|Du||D^2u|\eta^2\,dx\nonumber\\
&\le C(n)\int_{B_{\rho}}[|D^2u|^2-|D|Du||^2]^{\frac 12}|Du|\eta^2\,dx
-\int_{B_{r}}\bdz u|Du|\eta^2\,dx+2\int_{B_{r}}|Au-K||Du|\eta^2\,dx.
\end{align}
Applying H\"older inequality and \eqref{so-st}, the first term in the right-hand
side of \eqref{so-6} can be estimated as
\begin{align}\label{so-7}
\int_{B_{r}}[|D^2u|^2-|D|Du||^2]^{\frac 12}|Du|\eta^2\,dx
&\le \left(\int_{B_{r}}[|D^2u|^2-|D|Du||^2]\eta^2
\,dx\right)^{\frac12}\left(\int_{B_{r}}|Du|^2\eta^2\,dx\right)^{\frac12}\nonumber\\
&\le \left(\int_{B_{r}}|Du|^2|D\eta|^2
\,dx\right)^{\frac12}\left(\int_{B_{r}}|Du|^2\eta^2\,dx\right)^{\frac12}.
\end{align}
For the second term in the right-hand side of \eqref{so-6}, since
\begin{align*}
{\rm div}(|Du|Du)=|Du|\bdz u+|Du|^{-1}\bdz_\fz u=
|Du|(\frac{\bdz_\fz u}{|Du|^2}-\bdz u)+2|Du|\bdz u
\end{align*}
and noting $|\frac{\bdz_\fz u}{|Du|^2}-\bdz u|\le C(n)[|D^2u|^2-|D|Du||^2]^{\frac 12}$,
using integration by parts we deduced that
\begin{align}\label{so-8}
&-\int_{B_{r}}\bdz u|Du|\eta^2\,dx\nonumber\\
&\le C(n)\int_{B_{r}}[|D^2u|^2-|D|Du||^2]^{\frac 12}|Du|\eta^2\,dx
+\frac 12\int_{B_{r}}{\rm div}(|Du|Du)\eta^2\,dx\nonumber\\
&=C(n)\int_{B_{r}}[|D^2u|^2-|D|Du||^2]^{\frac 12}|Du|\eta^2\,dx
-\int_{B_{r}}\eta|Du|Du\cdot D\eta\,dx.
\end{align}
Note that the right-hand side in \eqref{so-8} can be bounded by the
right-hand side of \eqref{so-7} via H\"older inequality.  Hence, combing
\eqref{so-6}, \eqref{so-7} and \eqref{so-8} yields the
claim \eqref{wcl}.

\end{proof}

\section{Proof of Lemma \ref{key-it} and Lemma \ref{it}}

We begin with establishing, under a doubling assumption and some lower bound assumption  on $L^2$-norm of $Du$,
  a control of the  $L^2$-norm  of $Du$ by the $L^2$-norm of radial derivative of $u$ in an annulus.

\begin{lem}\label{key-it}
Let $f \in {\rm Lip}_{\loc}(\rr)$ satisfy \eqref{as-f}.
Suppose that $u\in C^2(B_2)$ is a stable solution to $-\bdz u=f(u)$ in $B_2$. There exists a sufficiently large constant $j\ge1$
independent of $f$ and $u$ such that the following holds:\\
Suppose that
\begin{align}\label{dou}
(2^{-j})^{-n+2}\int_{B_{2^{-j}}}|Du|^2\,dx\ge \dz (2^{-j+1})^{-n+2}\int_{B_{2^{-j+1}}}|Du|^2\,dx
\end{align}
for some $\dz>0$ and
\begin{align}\label{icd}
(2^{-j})^{-n+2}\int_{B_{2^{-j}}}|Du|^2\,dx\ge \max\left\{2^{-j},
(2^{-j})^{-n+3}
\int_{B_{2^{-j+1}}}|u|^2\,dx\right\}.
\end{align}
Then there is a  constant $C_0:=C_0(n,\dz,A,K)$ such that
\begin{align}\label{ctr0}
(2^{-j})^{-n+2}\int_{B_{3\times 2^{-j-1}}}|Du|^2\,dx\le
C_0\int_{B_{3\times 2^{-j-1}}\backslash B_{2^{-j}}}
(Du\cdot x)^2|x|^{-n}\,dx.
\end{align}

\end{lem}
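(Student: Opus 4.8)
The plan is to argue by contradiction and compactness, in the spirit of \cite[Lemma 3.1]{cfrs}, but using the two hypotheses \eqref{dou} and \eqref{icd} to recover the superharmonicity that in \cite{cfrs} was a free consequence of $f\ge0$. First I would zoom to the unit scale: with $\tilde u(x):=u(2^{-j}x)$ on $B_2$, the function $\tilde u$ is a $C^2$ stable solution of $-\bdz\tilde u=\tilde f(\tilde u)$ in $B_2$ with $\tilde f:=2^{-2j}f$, and $\tilde f$ satisfies \eqref{as-f} with the (small) constants $2^{-2j}A$ and $2^{-2j}K$. A change of variables turns \eqref{dou}, \eqref{icd} and the target \eqref{ctr0} into, respectively, $\int_{B_1}|D\tilde u|^2\ge\dz\int_{B_2}|D\tilde u|^2$, $\int_{B_1}|D\tilde u|^2\ge\max\{2^{-j},\,2^{-3j}\int_{B_2}|\tilde u|^2\}$, and $\int_{B_{3/2}}|D\tilde u|^2\le C_0\int_{B_{3/2}\setminus B_1}(D\tilde u\cdot x)^2|x|^{-n}\,dx$; so it suffices to exhibit a large $j$ for which the first two rescaled hypotheses force the third.

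If no such $j$ existed, then for each $k$ there would be a scale $2^{-j_k}$ with $j_k\to\fz$, a nonlinearity $f_k$ satisfying \eqref{as-f} with constants $A,K$, and a stable solution $u_k\in C^2(B_2)$ — already rescaled at scale $2^{-j_k}$, hence with nonlinearity constants $2^{-2j_k}A$, $2^{-2j_k}K$ — satisfying the first two rescaled hypotheses but with $\int_{B_{3/2}}|Du_k|^2>k\int_{B_{3/2}\setminus B_1}(Du_k\cdot x)^2|x|^{-n}\,dx$. Put $d_k:=(\int_{B_1}|Du_k|^2)^{1/2}$ and $w_k:=u_k/d_k-c_k$, where $c_k$ is the mean of $u_k/d_k$ over $B_1$; a standard computation shows $w_k$ is a stable solution of $-\bdz w_k=h_k(w_k)$ for some $h_k\in{\rm Lip}_{\loc}(\rr)$ satisfying \eqref{as-f} with constants tending to $0$. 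Here \eqref{icd} does the work: its first inequality gives $d_k\ge2^{-j_k/2}$, so dividing by $d_k$ does not blow up the $K$-term (the $K$-constant of $h_k$ is $\le2^{-3j_k/2}K$); its second inequality gives $\|u_k/d_k\|_{L^2(B_2)}\le2^{3j_k/2}$, so $|c_k|$ and the $A$-contribution are controlled once multiplied by the minuscule coefficient $2^{-2j_k}A$ (one gets $2^{-j_k/2}A$). Since \eqref{dou} and Poincaré's inequality make $w_k$ bounded in $W^{1,2}(B_2)$, Lemma \ref{sob} applied with a fixed cut-off equal to $1$ on $B_{3/2}$ then yields uniform bounds for $w_k$ in $W^{1,\frac{2n}{n-1}}(B_{3/2})\cap W^{2,1}(B_{3/2})$.

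Passing to a subsequence, $w_k\to w_\fz$ in $L^2(B_2)$, and from the $W^{1,1}$ bound on $Dw_k$ (Rellich, giving strong $L^q$-convergence for $q<\frac{n}{n-1}$) interpolated against the $L^{\frac{2n}{n-1}}$ bound, $Dw_k\to Dw_\fz$ strongly in $L^2(B_{3/2})$. Hence $\|Dw_\fz\|_{L^2(B_1)}=1$, and the contradiction hypothesis forces $Dw_\fz\cdot x=0$ a.e.\ in $B_{3/2}\setminus B_1$, so $w_\fz(x)=h(x/|x|)$ there with $h\in W^{1,2}(S^{n-1})$. Moreover $-\bdz w_k=h_k(w_k)$ is bounded in $L^1(B_{3/2})$ by the $W^{2,1}$ bound, while its negative part $\to0$ in $L^1(B_{3/2})$ — a direct consequence of \eqref{icd} and $j_k\to\fz$ as above — so the distributional limit $-\bdz w_\fz$ is a nonnegative Radon measure; that is, $w_\fz$ is superharmonic. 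Finally I would run the rigidity: on the annulus, $0$-homogeneity together with $-\bdz w_\fz\ge0$ forces $-\bdz_{S^{n-1}}h\ge0$, hence (integrating over $S^{n-1}$) $h$ is constant, so $w_\fz\equiv c$ on $B_{3/2}\setminus B_1$; then $w_\fz-c$ is a nonnegative (minimum principle on $B_1$) superharmonic function on $B_{3/2}$ vanishing on the annulus, so by the weak Harnack inequality $w_\fz\equiv c$ on $B_{3/2}$, contradicting $\|Dw_\fz\|_{L^2(B_1)}=1$. Undoing the two normalizations and the rescaling then produces the stated $C_0=C_0(n,\dz,A,K)$.

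The crux — the only place where genuinely more than \cite{cfrs} is needed — is the superharmonicity of $w_\fz$: because $f$ is merely bounded below by $A\min\{0,t\}-K$, the solutions $u_k$ are nowhere near superharmonic, and one must use \eqref{icd} in both of its roles (preventing $d_k$ from degenerating, and controlling $\|u_k\|_{L^2}$ against the small factor produced by zooming to scale $2^{-j_k}$) so that the nonlinear term becomes a nonnegative measure in the limit. Everything else — the Sobolev estimates of Lemma \ref{sob}, the interpolation giving strong $L^2$-convergence of the gradients, and the spherical rigidity — is then routine, essentially as in \cite{cfrs}.
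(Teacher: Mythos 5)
Your proposal is correct and follows essentially the same route as the paper's proof: a contradiction/compactness argument where the a priori bounds of Lemma \ref{sob} give uniform $W^{1,\frac{2n}{n-1}}\cap W^{2,1}$ control, hypothesis \eqref{icd} is used exactly as in the paper to make the $A$- and $K$-contributions vanish in the limit (yielding superharmonicity of the limit) and interpolation upgrades $L^1$-convergence of gradients to $L^2$, ending with the rigidity of $0$-homogeneous superharmonic functions. The only differences (rescaling first, normalizing by $\|Du\|_{L^2(B_1)}$ instead of over the $\tfrac32$-ball, and absorbing the small constants into a new nonlinearity $h_k$) are cosmetic bookkeeping, not a different method.
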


\begin{proof}[Proof of Lemma \ref{key-it}]
We do this by contradiction. For convenience write $r_j=2^{-j}$.
If the conclusion of the Lemma \ref{key-it} does not hold, then we can find a
sequence $\{f_j\}_{j\ge1}$ in ${\rm Lip}_{\loc}(\rr)$ satisfying $f_j(t)\ge A\min\{0,t\}-K$ for
all $t\in \rr$ and a sequence
$\{u_j\}_{j\ge1}$ in $C^2(B_2)$ such that $u_j$ is a stable solution to
$$-\bdz u_j=f_j(u_j)\quad{\rm in}\quad B_2$$
and
\begin{align}
&r_j^{-n+2}\int_{B_{r_j}}|Du_j|^2\,dx\ge \dz (r_{j-1})^{-n+2}\int_{B_{r_{j-1}}}|Du_j|^2\,dx \label{it1},\\
&r_j^{-n+2}\int_{B_{r_j}}|Du_j|^2\,dx\ge\max\left\{r_j,
r_j^{-n+3}
\int_{B_{r_{j-1}}}|u_j|^2\,dx\right\},\label{it2}
\end{align}
while
\begin{align}\label{itx}
r_j^{-n+2}\int_{B_{3r_j/2}}|Du_j|^2\,dx> j
\int_{B_{3r_j/2}\backslash B_{r_j}}
(Du_j\cdot x)^2|x|^{-n}\,dx.
\end{align}

Now we normalize by defining
$$v_j(x):=\left(r_j^{-n+2}\int_{B_{3r_j/2}}|Du_j|^2\,dy\right)^{-1/2}
\left(u_j(r_jx)-\mint{-}_{B_{r_{j-1}}}u_j\,dy\right)\quad {\rm in}\ B_2.$$
It follows by \eqref{it1} and \eqref{itx} that
\begin{align}\label{ctr}
\int_{B_{3/2}}|Dv_j|^2\,dx=1,\quad \lim_{j\to \fz}
\int_{B_{3/2}\backslash B_1}(Dv_j\cdot x)^2\,dx=0.
\end{align}
We claim that $v_j\in W^{1, 2+\frac {2}{n-1}}(B_{7/4})\cap W^{2,1}(B_{7/4})$ uniformly in $j\ge1$ and
\begin{align}\label{low-bd}
-\int_{B_{7/4}}\bdz v_j(x)\xi\,dx \ge- C(n,\dz)(Ar_j^{1/4}+Kr_j^{3/2}
)\|\xi\|_{L^\fz(B_{7/4})}.
\end{align}
for all nonnegative function $\xi\in C^\fz_c(B_{7/4})$.

If this claim
holds for the moment,  by using the weak compactness
of Sobolev space, there exists a function
$v\in W^{1,\frac {2n}{n-2}}(B_{7/4})$ such that
$v_j\to v$ in $L^2(B_{7/4})$ and
$Dv_j\to Dv$ in $L^1(B_{7/4})$ as $j\to \fz$. Note that
$Dv_j\in L^\frac {2n}{n-2}(B_{7/4})$ uniformly $j\ge 1$. Applying
H\"older inequality we obtain
$$\|D(v_j-v)\|_{L^2(B_{7/4})}\le
\|D(v_j-v)\|^{\frac 1{n+1}}_{L^1(B_{7/4})}\|D(v_j- v)\|^{\frac{n}{n+1}}_{L^{\frac{2n}{n-1}}(B_{7/4})}
\le C\|D(v_j-v)\|^{\frac 1{n+1}}_{L^1(B_{7/4})}\to 0,$$
which shows that $Dv_j\to  Dv$ in $L^2(B_{7/4})$.
By \eqref{low-bd}, for all nonnegative function $\xi\in C^\fz_{c}
(B_{7/4})$ via integration by parts we have
\begin{align*}
-\int_{B_{7/4}}v\bdz \xi\,dx&=\lim_{j\to \fz}
-\int_{B_{7/4}}v_{j}\bdz \xi\,dx
=-\lim_{j\to \fz}\int_{B_{7/4}} \bdz v_{j} \xi\,dx\\
&\ge - C(n,\dz)\lim_{j\to \fz}(Ar_j^{1/4}+Kr_j^{3/2}
)\|\xi\|_{L^\fz(B_{7/4})}=0.
\end{align*}
Observe that $v\in W^{1,2}(B_{7/4})$. This implies that $v$ is
superharmonic in $B_{7/4}$. On the other hand, since $v_j\to v
$ in $W^{1,2}(B_{7/4})$, then by \eqref{ctr} we obtain
\begin{align}\label{ctr-1}
\quad \int_{B_{3/2}}|Dv|^2\,dx=1,\quad \int_{B_{3/2}\backslash B_1}|Dv\cdot x|^2\,dx=0.
\end{align}
This is a contradiction. Indeed, the second identity in \eqref{ctr-1}
tells us that $v$ is zero homogeneous function on $B_{3/2}\backslash B_1$.
From this, by maximum principle for superharmonic $v$ one gets
$v\equiv C$ on $B_{3/2}$ for a constant $C$. This contradicts with the first identity in \eqref{ctr-1}.

We now proof this claim.
From \eqref{it1}, one has $\int_{B_2}|Dv_j|^2\,dx\le C(n)\dz^{-1}$.
Then by Poincar\'e inequality, we also have $\|v_j\|^2_{L^2(B_2)}\le C(n)\dz^{-1}$.
Thus $v_j\in W^{1,2}(B_2)$ uniformly in $j\ge1$. Moreover, by \eqref{w1n} in Lemma \ref{w1n} we have
\begin{align}\label{xw1n}
\left(r_j^{-n+\frac {2n}{n-1}}\int_{B_{7r_{j}/4 }}|Du_j(y)|^{\frac {2n}{n-1}}\,dy\right)^{\frac n{n-1}}&\le C(n)r_j^{-n+2}\int_{B_{r_{j-1}}}|Du_j(y)|^2\,dy\nonumber\\
&+C(n)r_j^{-n+4}\int_{B_{r_{j-1}}}[A^2|u_j(y)|^2+K^2]\,dy.
\end{align}
Using the condition \eqref{it1} and \eqref{it2} we see that
\begin{align}
&r_j^2\le r_j^{-n+3}\int_{B_{r_{j-1}}}|Du_j|^2\,dx
\le  C(n)\frac 1\dz r_j^{-n+3}\int_{B_{3r_j/2}}|Du_j|^2\,dx,\label{it-1}\\
&r_j^{-n+4}\int_{B_{r_{j-1}}}|u_j|^2\,dx\le C(n)r_j^{-n+3}\int_{B_{r_{j-1}}}|Du_j|^2\,dx
\le C(n)\frac 1\dz r_j^{-n+3}\int_{B_{3r_{j}/2}}|Du_j|^2\,dx. \label{it-2}
\end{align}
Thanks this, by \eqref{xw1n} yields
\begin{align*}
\left(r_j^{-n+\frac {2n}{n-1}}\int_{B_{7r_{j}/4 }}|Du_j(y)|^{\frac {2n}{n-1}}\,dy\right)^{\frac n{n-1}}&\le C(n)\frac 1{\dz}r_j^{-n+2}\int_{B_{3r_{j}/2}}|Du_j(y)|^2\,dy\\
&+C(n)\frac 1\dz[A^2 r_j+K^2r_j^3](r_j)^{-n+2}\int_{B_{3r_{j}/2}}|Du_j(y)|^2\,dy,
\end{align*}
and  dividing both sides by
$r_j^{-n+2}\int_{B_{3r_j/2}}|D u_j|^2\,dx$ one has
\begin{align*}
\left(\int_{B_{7/4 }}|Dv_j|^{\frac {2n}{n-1}}\,dx\right)^{\frac n{n-1}}&\le C(n)\frac 1\dz
[A^2+K^2+1].
\end{align*}
This proves $v_j\in W^{1,\frac {2n}{n-1}}(B_{7/4})$ uniformly in $j\ge 1$.
Also, using \eqref{w21} in Lemma \ref{sob} we have
\begin{align*}
&r_j^{-n+2}\int_{B_{7r_{j}/4 }}|D^2u_j(y)|\,dy\\
&\le C(n)\left(r_j^{-n+2}\int_{B_{r_{j-1}}}
|Du_j(y)|^2\,dy\right)^{1/2}+C(n)\left(r_{j}^{-n+4}
\int_{B_{r_{j-1}}}
[A|u_j(y)|^2+K]\,dy\right)^{1/2},
\end{align*}
and applying $v_j$ to this inequality together with \eqref{it-1} and
\eqref{it-2} again yields
$$\int_{B_{7/4}}|D^2v_j|\,dx\le C(n)\frac1{\dz^{\frac12}}[A+K+1].$$

We finally prove \eqref{low-bd}. Since $f_j(t)\ge A\min\{0,t\}-K$ for all $t\in \rr$, a direct calculation shows that
\begin{align*}
-\bdz v_j(x)&=\left(r_j^{-n+2}\int_{B_{3r_j/2}}|Du_j|^2\,dy\right)^{-1/2}r_j^2f_j(u_j(r_jx))
\\
&\ge \left(r_j^{-n+2}\int_{B_{3r_j/2}}|Du_j|^2\,dy\right)^{-1/2}[-Ar_j^2|u_j(r_jx)|-Kr_j^2]
\quad \forall x\in B_{7/4}.
\end{align*}
For any non-negative $\xi\in C^\fz_c(B_{7/4})$, multiplying both sides by $\xi$ and integrating over on $B_{7/4}$ we obtain
\begin{align*}
-\int_{B_{7/4}}\bdz v_j(x)\xi\,dx
&\ge \left(r_j^{-n+2}\int_{B_{3r_j/2}}|Du_j|^2\,dy\right)^{-1/2}
\int_{B_{7/4}}[-Ar_j^2|u_j(r_jx)|-Kr_j^2]\xi\,dx\\
&\ge -C(n)\|\xi\|_{L^\fz(B_{7/4})}\left(r_j^{-n+2}\int_{B_{3r_j/2}}|Du_j|^2\,dy\right)^{-1/2}\\
&\quad \times
\left[A\left(r_j^{-n+4}\int_{B_{7r_j/4}}|u_j|^2\,dy\right)^{1/2}+Kr_j^2\right],
\end{align*}
where we also used H\"older inequality in last inequality. Then \eqref{low-bd} follows from
\eqref{it-1} and \eqref{it-2}. Hence we finish this proof.

\end{proof}

The following  lemma is a modified version of the result in \cite[Lemma 3.2]{cfrs}, which is a crucial point to get Theorem \ref{th1}.
\begin{lem}\label{it}
Let $\{a_j\}_{j\ge1}$, $\{b_{j}\}_{j\ge 0}$ and $\{d_j\}_{j\ge 0}$ be three sequences of nonnegative numbers satisfying $a_{j_0}\le M, b_{j_0}\le M$,$d_{j_0}\le M$,
\begin{align}\label{it-x1}
b_j\le b_{j-1},\quad a_j+b_j\le L a_{j-1},\quad d_j\le M2^{-j}\quad \mbox{for\ all}\ j\ge j_0+1,
\end{align}
and
\begin{align}\label{it-x2}
\mbox{if}\ a_j\ge \frac 12 a_{j-1}\ \mbox{ and}\  a_j\ge \max\{2^{-j},d_{j-1}\},\
\mbox{ then}\quad b_j\le L(b_{j-1}-b_j)\ \mbox{ for\ all}\ j\ge j_0+1
\end{align}
for some positive constants $M>0,L>2$ and for each fixed index $j_0\ge 1$, then there exist
constant $\theta=\theta(L)\in (1/2,1)$ and $C_0=C(L,j_0)$ such that
$$b_{j+1}\le C_0(M+1)(\theta^j+j\theta^j),\quad \forall j\ge j_0+1.$$
\end{lem}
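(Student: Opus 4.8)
The plan is to mimic the dyadic iteration scheme of \cite[Lemma 3.2]{cfrs}, with the extra care forced by the hypothesis \eqref{it-x2} only applying when the lower-bound condition $a_j\ge\max\{2^{-j},d_{j-1}\}$ holds. First I would set up a dichotomy at each level $j\ge j_0+1$: either (a) $a_j<\tfrac12 a_{j-1}$ (``$a$ decays''), or (b) $a_j\ge\tfrac12 a_{j-1}$ but $a_j<\max\{2^{-j},d_{j-1}\}$ (``$a$ is small''), or (c) $a_j\ge\tfrac12 a_{j-1}$ and $a_j\ge\max\{2^{-j},d_{j-1}\}$ (``good level''), in which case \eqref{it-x2} gives $b_j\le L(b_{j-1}-b_j)$, i.e. $b_j\le\frac{L}{L+1}b_{j-1}$ with $\frac{L}{L+1}=:\theta\in(1/2,1)$ (the lower bound $\theta>1/2$ uses $L>2$). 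So on good levels $b$ contracts by a fixed factor; the monotonicity $b_j\le b_{j-1}$ guarantees $b$ never increases; the whole game is to show that levels of type (b) and (c) together are frequent enough.

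The key observation is that consecutive type-(a) levels cannot persist: since $a_j+b_j\le La_{j-1}$ always and $b_j\ge0$, we have $a_j\le La_{j-1}$, but more usefully, whenever $a_j<\tfrac12 a_{j-1}$ the quantity $a$ has dropped by a factor $2$, and it can only grow back by a factor $L$ per step, so a run of consecutive type-(a) steps is impossible to sustain indefinitely against the constraint $a_{j}\le La_{j-1}$ alone — instead I would argue via the \emph{size} of $a_j$: after a type-(a) step $a_j<\tfrac12 a_{j-1}\le\tfrac12 L^{j-1-j_0}M$ is not small enough by itself, so the cleaner route is to track when $a_j$ falls below the threshold $\max\{2^{-j},d_{j-1}\}\le\max\{2^{-j},M2^{-(j-1)}\}\le 2M\cdot 2^{-j}$. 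Concretely: if at some level $a_j\ge\tfrac12 a_{j-1}$ we are in case (b) or (c); in case (b), $a_j\le 2M2^{-j}$ is already exponentially small, and then by $a_{j'}\le L^{j'-j}a_j\le 2ML^{j'-j}2^{-j}$ the value stays controlled for a while — but this is where the argument needs the doubling structure more carefully. The honest approach, following \cite{cfrs}, is: let $N_j$ be the number of indices $k\in\{j_0+1,\dots,j\}$ of type (a); on the complementary (good-or-small) indices $b$ either contracts by $\theta$ (type c) or we are in type (b); and one shows that whenever two type-(a) indices are separated only by type-(b) indices, $a$ has become so small ($\le C M2^{-k}$) that combined with $a_k\ge\tfrac12 a_{k-1}$ on type (b) we get a geometric decay of $a$ itself, forcing $a_k<2^{-k}$ eventually, which then \emph{is} a good level.

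I would organize the bookkeeping exactly as in \cite[Lemma 3.2]{cfrs}: write $b_{j+1}$ as $b_{j_0}$ times a product of contraction factors over good levels, plus an error. On type-(a) levels use $b_j\le b_{j-1}$ (no loss, no gain); on type-(b) levels use $b_j\le b_{j-1}$ but note the premise $a_j\ge\frac12 a_{j-1}$ together with smallness $a_j\le 2M2^{-j}$ to control how often (b) can occur before an (a) step resets things; on type-(c) levels gain a factor $\theta$. A counting lemma then shows that among $j_0+1,\dots,j$ at least a fixed fraction $c(L)j - C(L,j_0)$ indices are type (c), whence $b_{j+1}\le M\theta^{c(L)j-C(L,j_0)}$; absorbing constants and relabeling $\theta$ (replacing $\theta$ by $\theta^{c(L)}\in(1/2,1)$, valid after possibly enlarging to keep it $>1/2$ using $L>2$) gives $b_{j+1}\le C_0(M+1)\theta^j$. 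The extra $j\theta^j$ term in the claimed bound is the slack needed to absorb the finitely many ``transitional'' blocks and the polynomial overhead from the threshold $2^{-j}$ interacting with $d_{j-1}\le M2^{-j}$; it is harmless since $j\theta^j\to 0$. The main obstacle I anticipate is precisely the combinatorics of the three-way dichotomy — proving rigorously that type-(a) runs cannot be long without triggering a good level — which is the step where \cite{cfrs} used $f\ge0$ and where here one must instead exploit \eqref{it-x1} (the bound $d_j\le M2^{-j}$) together with the $2^{-j}$ threshold in \eqref{it-x2}. Once that frequency estimate for good levels is in hand, the rest is the standard telescoping product estimate.
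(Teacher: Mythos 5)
There is a genuine gap: your argument rests on a counting claim --- that among $j_0+1,\dots,j$ at least a fixed fraction $c(L)\,j-C(L,j_0)$ of the indices are ``good'' (type (c)), so that telescoping the contraction factor $\frac{L}{L+1}$ over those indices gives geometric decay of $b$. That claim is false. Take, for a suitable large $M$ (say $M\ge 2L$), the sequences $a_j=2^{-j}$, $d_j=M2^{-j}$, $b_j=(2L-1)2^{-j}$: all the hypotheses \eqref{it-x1}--\eqref{it-x2} hold (indeed $a_j+b_j=La_{j-1}$ and $b_j\le b_{j-1}$), yet at every level $a_j=\tfrac12 a_{j-1}$ and $a_j=2^{-j}<d_{j-1}=2M2^{-j}$, so every level is of your type (b) and the premise of \eqref{it-x2} is never activated: there are no good levels at all. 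In this regime the decay of $b$ cannot come from contraction factors; it comes from the structural inequality $b_{j+1}\le a_{j+1}+b_{j+1}\le La_j$ combined with the smallness $a_j\le\max\{2^{-j},d_{j-1}\}\le (M+1)2^{-j}$ available precisely at type-(b) levels --- a mechanism your outline never invokes. Your attempted repair (``type-(b) runs force $a_k<2^{-k}$ eventually, which then is a good level'') is also backwards: a good level requires $a_k\ge 2^{-k}$, so $a_k<2^{-k}$ makes \eqref{it-x2} inapplicable rather than applicable. Without supplying the $b_{j+1}\le La_j$ mechanism, the bookkeeping cannot close.

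For comparison, the paper bypasses the combinatorics entirely with the weighted quantity $c_j:=(a_j)^{\epsilon}b_j$: in case (a) the factor $(a_j)^{\epsilon}$ contributes $2^{-\epsilon}$; in case (c) one has $b_j\le\frac{L}{L+1}b_{j-1}$ and $a_j\le La_{j-1}$, and $\epsilon=\epsilon(L)$ is chosen so that $L^{1+\epsilon}/(L+1)=2^{-\epsilon}$ (here $L>2$ is used); in case (b) one does not compare $c_j$ with $c_{j-1}$ at all but bounds it directly, $c_j\le (M+1)^{1+\epsilon}2^{-\epsilon j}$, since $a_j\le(M+1)2^{-j}$ and $b_j\le M$. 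This yields $c_j\le 2^{-\epsilon}c_{j-1}+(M+1)^{1+\epsilon}2^{-\epsilon j}$ in every case, hence $c_j\le (M+1)^{1+\epsilon}2^{-\epsilon j}(2^{\epsilon j_0}+j)$ by iteration, and finally the inequality $b_{j+1}^{1+\epsilon}\le (La_j)^{\epsilon}b_j=L^{\epsilon}c_j$ (using $b_{j+1}\le b_j$ and $b_{j+1}\le La_j$) converts the decay of $c_j$ into the stated bound with $\theta=2^{-\epsilon/(1+\epsilon)}$; this last conversion is exactly where the type-(b) information you are missing enters.
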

\begin{proof}[Proof of Lemma \ref{it}]
Let $\ez>0$ be a constant to be chosen later. Define
$$\mbox{$c_j:=(a_j)^{\ez}b_j,\quad \forall j\ge j_0+1$,\quad for fixed index $j_0\ge 1$.}$$
Below we consider the following three cases.

\noindent {\it Case 1}: If $a_j\le \frac 12 a_{j-1}$, then by $b_j\le b_{j-1}$ we get
  $$c_j=(a_j)^{\ez}b_j\le 2^{-\ez}(a_{j-1})^{\ez}b_{j-1}
  \le 2^{-\ez} c_{j-1}.$$

\noindent{\it Case 2}: If $a_j\ge \frac 12 a_{j-1}$ and $a_j\ge \max\{2^{-j},d_{j-1}\}$, then
applying \eqref{it-x2} we obtain
  $$b_j\le \frac{L}{1+L}b_{j-1}.$$
Hence, using $a_j\le La_{j-1}$ we have
\begin{align*}
c_j=(a_j)^{\ez}b_j\le L^{\ez} (a_{j-1})^{\ez}
\frac{L}{L+1}b_{j-1}=\frac{L^{1+\ez}}{L+1}c_{j-1}.
\end{align*}
Since $L>2$, we can choose a suitable $\ez>0$ such that
$2^{-\ez}=L^{1+\ez}/(1+L)$. Therefore
$$c_j\le 2^{-\ez} c_{j-1}.$$

\noindent{\it Case 3}: If $a_j\ge \frac 12 a_{j-1}$ and $a_j\le\max\{2^{-j},d_{j-1}\}$, by $d_{j-1}\le M2^{-j}$ we have
$$a_j\le \max\{2^{-j},d_{j-1}\}\le (M+1)2^{-j}.$$
Since $b_j\le M 2^{-j}\le M$, we obtain
\begin{align*}
c_j=(a_j)^{\ez}b_j\le (M+1)^{1+\ez}(2^{-\ez})^j\le
(M+1)^{1+\ez}(2^{-\ez})^j
+2^{-\ez} c_{j-1}.
\end{align*}

Combing above all cases, we conclude that
\begin{align*}
c_j\le 2^{-\ez} c_{j-1}+(M+1)^{1+\ez}(2^{-\ez})^j,\quad \forall j\ge j_0+1.
\end{align*}
By iteration, one has
\begin{align*}
c_j&\le (2^{-\ez})^{j-j_0}c_{j_0}+(M+1)(j-j_0) (2^{-\ez})^{j}
\le (M+1)^{1+\ez} (2^{-\ez})^j[2^{\ez j_0}+j],\quad \forall j\ge j_0+1.
\end{align*}
In view of \eqref{it-x1}, $a_{j_0}\le M$ and
$b_{j_0}\le M$, we deduce
\begin{align*}
(b_{j+1})^{1+\ez}\le L^{\ez} (a_{j})^{\ez} b_j
=L^{\ez} c_j\le [(L+1)(M+1)]^{1+\ez} (2^{-\ez})^j[2^{\ez j_0}+j].
\end{align*}
Now set $\theta=2^{-\ez/(1+\ez)}\in (1/2,1)$. Thank to this, noting that
$(2^{\ez j_0}+j)^{\frac 1{1+\ez}}\le 2^{j_0 \theta}+j$, it follows that
\begin{align*}
b_{j+1}&\le C(L,j_0)(M+1)(\theta^j+j\theta^j).
\end{align*}
Hence we finish this proof.

\end{proof}

\section{Proof of Theorem \ref{th1} and Theorem \ref{th2}}

We now proof Theorem \ref{th1} and Theorem \ref{th2} in this section.

\begin{proof}[Proof of Theorem \ref{th1}]
We begin by assuming that $3\le n\le 9$. Indeed, in the case $n=2$,
one can add extra artificial variables (see for instance \cite{cfrs}). Given any
$x_0\in \Omega$, let $R>0$ satisfy $R<\frac{1}{4}{\rm dist}(x_0,\partial \Omega)$ and
hence $B(x_0,R)\Subset \Omega$. We may suppose that
$x_0=0$ and $R=1$ by translation and scaling.
We claim that there exists a dimensional constant $\az=\az(n)\in (0,1)$ such that
\begin{align}\label{cla}
\int_{B_{\rho}}(Du\cdot x)^2|x|^{-n}\,dx\le C\rho^{2\az}
\quad \forall0<\rho<1/2,
\end{align}
where $C=C(A,n,K)(\|u\|^2_{W^{1,2}(B_1)}+1)$. Suppose that this claim holds for
the moment. Then \eqref{holder} follows by \cite[Theorem C.2]{c22}.

We split the proof of this claim in two steps.

\noindent{\bf Step} 1:
We prove that there exists a constant $\theta=\theta(n)\in (0,1/2)$ such
that
\begin{align}\label{cla1}
\int_{B_{2^{-j-1}}}(Du\cdot x)^2|x|^{-n}\,dx \le
C(\theta^j +j\theta^j)\quad\forall j\ge 1,
\end{align}
where $C=C(A,n,K)(\|u\|^2_{W^{1,2}(B_1)}+1)$.

 Let $j_0\ge 1$ be a universal constant and let $\theta=\theta(n)\in (0,1/2)$ to be
chosen later.
  Observe that, by Lemma
\ref{key-le} we clearly have
\begin{align*}
\int_{B_{2^{-j}}}(Du\cdot x)^2|x|^{-n}\,dx\le C(n)\int_{B_1}|Du|^2\,dx
\le C(n)\theta^{-j_0-1}  \theta^j\int_{B_1}|Du|^2\,dx\quad\forall 1\le j\le j_0+1.
\end{align*}
Below we consider the case $j\ge j_0+1$ for \eqref{cla1}.

Let $j\ge j_0+1$ and set
\begin{align*}
a_j:=(2^{-j})^{-n+2}\int_{B_{2^{-j}}}|Du|^2\,dx,\quad
b_j:=\int_{B_{2^{-j}}}(Du\cdot x)^2|x|^{-n}\,dx,\ d_j:=(2^{-j})^{-n+3}
\int_{B_{2^{-j}}}u^2\,dx.
\end{align*}
Clearly, we have
\begin{align*}
a_{j}=(2^{-j})^{-n+2}\int_{B_{2^{-j}}}|Du|^2\,dx
\le 2^{n-2}(2^{-j+1})^{-n+2}\int_{B_{2^{-j+1}}}|Du|^2\,dx=2^{n-2}a_{j-1}.
\end{align*}
By Lemma \ref{key-le} we also get
\begin{align*}
b_j=\int_{B_{2^{-j}}}(Du\cdot x)^2|x|^{-n}\,dx
\le C(n)\int_{B_{\frac 32\times2^{-j}}\backslash B_{2^{-j}}}|Du|^2|x|^{-n+2}\,dx
\le C(n)a_{j-1}.
\end{align*}
On the other hand, applying $p=n>2$ to Lemma \ref{mo} one has
\begin{align*}
d_{j}=(2^{-j})^{-n+3}
\int_{B_{2^{-j}}}u^2\,dx\le C(n)\|u\|^2_{W^{1,2}(B_1)}2^{-j}.
\end{align*}
Combing above we conclude that
\begin{align}\label{ft}
b_j\le b_{j-1},\quad a_j+b_j\le C_0(n)a_{j-1},\
d_j\le C(n)\|u\|^2_{W^{1,2}(B_1)}2^{-j}\quad \forall j\ge j_0+1.
\end{align}

Now using Lemma \ref{key-it} with $\dz=\frac 12$, we can find a sufficiently large $j_0\ge1$ independent of $u$ and $f$ such that if
 \begin{align*}
 a_j\ge\frac 12 a_{j-1}\quad {\rm and}\quad a_j\ge \max\{2^{-j},d_{j-1}\},\quad \forall j\ge j_0+1,
 \end{align*}
then there exists a constant $C(n,A,K)$ such that
 \begin{align}\label{t4}
 (2^{-j})^{-n+2}\int_{B_{\frac32\times 2^{-j}}}|Du|^2\,dx
 \le C(n,A,K)\int_{B_{\frac 32\times 2^{-j}}\backslash B_{2^{-j}}}
 (Du\cdot x)^2|x|^{-n}\,dx.
 \end{align}
Thank to this, it follows from Lemma \ref{key-le} that
\begin{align*}
b_j
\le C(n)(2^{-j})^{-n+2}\int_{B_{\frac 32\times 2^{-j}}}
|Du|^2\,dx&\le C(n,A,K)\int_{B_{\frac32\times 2^{-j}}\backslash B_{2^{-j}}}
 (Du\cdot x)^2|x|^{-n}\,dx\\
 &\le C(n,A,K)(b_{j-1}-b_j).
\end{align*}
Hence  we conclude that there exists a universal constant $j_0\ge 1$ independent of
$f$ and $u$ such that if
 \begin{align}\label{as}
 a_j\ge \frac 12 a_{j-1}\quad {\rm and}\quad a_j\ge \max\{2^{-j},d_{j-1}\}
 \end{align}
 for all $j\ge j_0+1$, one has
 \begin{align}\label{t5}
 b_j\le C(n,A,K)(b_{j-1}-b_j).
 \end{align}
Now by writing $L:=C_0(n)+C(n,A,K)$ and $M:=C(n)(\|u\|^2_{W^{1,2}(B_1)}+1)$.

Combing \eqref{ft} and \eqref{as} with \eqref{t5}, we conclude that
 \begin{enumerate}
 \item[$\bullet$] $b_j\le b_{j-1}$ for all $j\ge j_0+1$;
  \item[$\bullet$] $a_j+b_j\le La_{j-1}$ for all $j\ge j_0+1$;
  \item[$\bullet$]$d_{j}\le M2^{-j}$ for all $j\ge j_0+1$;
  \item[$\bullet$] If $a_j\ge \frac 12a_{j-1}$ and $a_j\ge \max\{2^{-j},d_{j-1}\}$,
  then $b_j\le L(b_j-b_{j-1})$ for all $j\ge j_0+1$.
 \end{enumerate}

Thank to this, by Lemma \ref{key-le} and \cite[Lemma 1.7]{pzz22} we also have $
a_{j_0},b_{j_0},d_{j_0}\le M$, hence one can use Lemma \ref{it} to get
\eqref{cla1} as desired.

\indent{\bf Step 2}: We proof \eqref{cla}.
Given any $0<\rho\le 1/2$, we can find
$k\ge 1$ such that
$2^{-k-1}\le \rho\le2^{-k}$.
Note that $\theta\in (1/2,1)$. We write $\tau_\theta=\theta^{-1}\in (1,2)$ since
$\theta\in (\frac 12,1)$. Then,
\begin{align}\label{t1}
k\le -\log_{\tau_\theta} \rho/\log_{\tau_\theta}2 \le k+1.
\end{align}
Since $0<\rho\le 1/2$, from \eqref{t1} one has
\begin{align}\label{t2}
k\ge -\log_{\tau_\theta} \rho/\log_{\tau_\theta}2-1\ge 0.
\end{align}
Then by $\rho\le2^{-k}$ and \eqref{cla1} we get
\begin{align}\label{t3}
\int_{B_\rho}(Du\cdot x)^2|x|^{-n}\,dx\le C
(\theta^k+k\theta^k),
\end{align}
where $C=C(n,A,K)(\|u\|^2_{W^{1,2}(B_1)}+1)$.
Note that $\tau_{\theta}^{\log_{\tau_\theta} \rho}=\rho$ and $\tau_\theta=\theta^{-1}\in (1,2)$, it follows by \eqref{t2} that
\begin{align*}
\theta^k\le \theta^{-1}\left[(\theta^{-1})^{\log_{\tau_\theta} \rho}\right]^{1/\log
_{\tau_\theta} 2}
= \theta^{-1} \rho^{4\az},
\end{align*}
where $4\az=1/\log_{\tau_\theta} 2\in (0,1)$ since $\theta\in (1/2,1)$. Thus we get
\begin{align*}
\int_{B_\rho}(Du\cdot x)^2|x|^{-n}\,dx\le
C(\rho^{4\az}+\rho^{4\az}\log_{\tau_\theta} \rho^{-1}),\quad \forall 0<\rho
\le 1/2,
\end{align*}
where $C=C(n,A,K)(\|u\|^2_{W^{1,2}(B_1)}+1)$.
Observe that
$$0\le \rho^{2\az}\log_{\tau_\theta} \rho^{-1}\le \theta^{\frac 1{\ln (1/\theta)}}
\frac 1{\az\ln (1/\theta)}\le\frac 1\az ,\quad \forall \rho\in [0,1],$$
since the function $\rho^{\az}\log_{\tau_\theta}  \rho^{-1}$ attains its
maximum at $\rho=\theta^{\frac 1{\az\ln (1/\theta)}}$ on $[0,1]$.
Hence we complete this proof.

\end{proof}

Thanks to Theorem \ref{th1}, we are now ready to prove Theorem \ref{th2}.

\begin{proof}[Proof of Theorem \ref{th2}]
Let $2\le n\le 9$. We first approximate the $W^{1,2}$-stable solution
via $C^2$-stable solution.  Given any $\ez\in (0,1)$ and $x_0\in \Omega$, we claim that there exists
$r_0=r_0(A,K,\Omega)\in (0,\frac 14{\rm dist}(x_0,\partial \Omega))$ such that
$u^{\ez}\in C^2(B(x_0,r_0)$ is a stable solution to
\begin{align}\label{app}
-\bdz u^{\ez}=f_{\ez}(u^{\ez})\quad{\rm in}\ B_{r_0}(x_0);\
u^{\ez}=u\quad{\rm on}\ \partial B_{r_0}(x_0),
\end{align}
where $f_{\ez}\in{\rm Lip}_{\loc}(\rr)$ is given by
\begin{align*}
f_{\ez}(t):=\left\{
\begin{aligned}
&f(t)\quad&{\rm if}\ t<1/\ez ,\\
&f(1/\ez )+f'_-(1/\ez )(t-1/\ez )\quad&{\rm if}\ t\ge 1/\ez.
\end{aligned}
\right.
\end{align*}
Moreover, $u^{\ez}\to u$ in $W^{1,2}(B_{r_0}(x_0))$ and
$u^{\ez}\to u$ a.e. in $B_{r_0}(x_0)$ as $\ez\to 0$.

%
To see this, we follow the idea of \cite{cfrs,pzz22}. Define
$$\bar A:=\min\{f'_{-}(0),A\}.$$
Since $f$ is nondecreasing, we have
\begin{align}\label{dA}
0\le \bar A\le A.
\end{align}
Moreover, we conclude that
\begin{align}\label{clA}
f(t)\ge \bar At-K\quad \forall t\in \rr.
\end{align}
Recalling that $f(t)\ge A\min\{0,t\}-K$ for all $t\in \rr$(by the assumption
\eqref{as-f}). Hence
$f(0)\ge -K$. So by the convexity of
$f$, we get
\begin{align}\label{clA-1}
f(t)\ge f'_{-}(0)t+f(0)\ge f'_{-}(0)t-K\quad \forall t\in \rr.
\end{align}
Noting that $f'_{-}(0)\ge 0$. If $f'_{-}(0)\le A$, \eqref{clA} follows. If
$f'_{-}(0)\ge A$, by \eqref{as-f} note that
$$f(t)\ge A\min\{0,t\}-K=At-K\quad \forall t<0.$$
It remains to check $f(t)\ge At-K$ for all $t\ge 0$. This follows by
$f'_{-}(0)\ge A\ge 0$ and \eqref{clA-1} as desired.

Since $0\le \bar A\le A$, we can choose $r^{\star}>0$ satisfying
$(C_0(n)r^{\star})^{-1}>8A$ for some
dimensional constant $C_0(n)>0$ such that $\bar A$ is not first eigenvalue of
problem $-\bdz v=\lz v$ in $B_r(x_0)$ with $v=0$ on
$\partial B_{r}(x_0)$ for all $r<r^{\star}$ and for some $\lz>0$. Hence, given $r_0<\min\{r^{\star},\frac 12{\rm dist}
(x_0,\partial \Omega)\}$, by \eqref{clA} and \cite[Lemma 3.1]{pzz22}
we conclude that there exists a solution $v\in W^{1,2}(B_{r_0}(x_0))\cap C^2(B_{r_0}(x_0))$ to
$$-\bdz v=\bar Av-K\quad{\rm in}\ B_{r_0}(x_0);\ v=u\quad{\rm on}\ \partial
B_{r_0}(x_0)$$
Moreover, we have
$$v\le u\quad {\rm a.e.\ in}\quad B_{r_0}(x_0).$$
From this, noting that $f_{\ez}\in {\rm Lip}_{\loc}(\rr)$ is nondecreasing and convex satisfying
\eqref{clA},
one can repeat the argument of \cite{cfrs,pzz22} to establish a $C^2$-stable solution
$u^{\ez}$ in approximating $W^{1,2}$-stable solution $u$. Here we omit details since this proof follows
by a slight modifications; see \cite[Section 3]{pzz22}.

Observe that, by \eqref{dA} and \eqref{clA}, all constants in Lemma \ref{sob} only depend on
$A$, $K$ and $n$, so applying Theorem \ref{th1} to $u^{\ez}$, we obtain
\begin{align*}
\underset{{B_r(x_0)}}{\rm osc}\ u^{\ez}\le C(n,A,K)\left(\frac{r}R\right)^{\alpha}(\|u^{\ez}\|_{W^{1,2}(B_{2R}(x_0))}+1),\quad
\forall x_0\in\Omega,\, 0<r<R<\frac1 4r_0,
\end{align*}
and hence passing to limit $\ez\to 0$ as desired.
\end{proof}

\section{Appendix}
Here, we provide the proof of Lemma \ref{mo}. We first recall the following key result
due to Cabr\'e, Figalli, Ros-Oton and Serra \cite[Lemma 2.1]{cfrs}.
\begin{lem}\label{key-le}
Let $n\ge 2$ and $f\in {\rm Lip}_{\loc}(\rr)$. Assume
that $u\in C^2(B_2)$ is a stable solution of $-\bdz u=f(u)$ in $B_2$.
Then for all $\eta\in C^{0,1}_c(B_2)$, we have
\begin{align}\label{keq-1}
&(n-2)\int_{B_2}|Du|^2\eta^2\,dx\nonumber\\
&\le -2\int_{B_2}(x\cdot D\eta)\eta|Du|^2+4\int_{B_2}(x\cdot Du)(Du\cdot D\eta)\eta\,dx+ \int_{B_2}(Du\cdot x)^2|D\eta|^2\,dx.
\end{align}
Moreover, if $3\le n\le 9$, then for all $r\in (0,1)$ it holds
\begin{align}\label{keq-2}
\int_{B_r}(Du\cdot x)^2|x|^{-n}\,dx\le C(n)\int_{B_{3r/2}\backslash B_r}
|Du|^2|x|^{-n+2}\,dx.
\end{align}

\end{lem}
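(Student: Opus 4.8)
The plan is to derive \eqref{keq-1} by testing the stability inequality \eqref{st} with the specific function $\xi=(x\cdot Du)\eta$, and then to obtain \eqref{keq-2} from \eqref{keq-1} through the singular choice $\eta=|x|^{-(n-2)/2}\psi$, with $\psi$ a cut-off equal to $1$ on $B_r$ and supported in $B_{3r/2}$.

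For \eqref{keq-1} I would set $c:=x\cdot Du$ and first record that, differentiating $-\bdz u=f(u)$, one has $-\bdz(\pa_iu)=f'(u)\pa_iu$ a.e.\ (made rigorous by first replacing $f$ by a $C^1$ approximant and passing to the limit), hence $f'(u)c=-\bdz c+2\bdz u$. Plugging $\xi=c\eta$ into \eqref{st}, expanding $|D(c\eta)|^2=\eta^2|Dc|^2+2c\eta\,Dc\cdot D\eta+c^2|D\eta|^2$ and integrating the terms $\int c\eta^2(-\bdz c)$ and $\int c\eta^2\bdz u$ by parts, the quantities $\int\eta^2|Dc|^2$ and $\int c\eta\,Dc\cdot D\eta$ cancel, and one is left with $-2\int\eta^2\,Dc\cdot Du-4\int c\eta\,D\eta\cdot Du\le\int c^2|D\eta|^2$. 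Since $Dc=Du+(D^2u)x$ gives $Dc\cdot Du=|Du|^2+\tfrac12\,x\cdot D(|Du|^2)$, one further integration by parts using ${\rm div}(\eta^2x)=n\eta^2+2\eta\,x\cdot D\eta$ turns $-2\int\eta^2\,Dc\cdot Du$ into $(n-2)\int|Du|^2\eta^2+2\int|Du|^2\eta(x\cdot D\eta)$, and rearranging yields \eqref{keq-1} exactly; none of this uses a sign condition on $f$, so \eqref{keq-1} holds for every $f\in{\rm Lip}_{\loc}(\rr)$.

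For \eqref{keq-2} I would fix $r\in(0,1)$, pick $\psi\in C_c^\fz(B_{3r/2})$ with $\psi\equiv1$ on $B_r$, $0\le\psi\le1$, $|D\psi|\le C/r$, and take $\eta=|x|^{-(n-2)/2}\psi$ — more precisely the $\dz\downarrow0$ limit of $\eta_\dz=\min\{|x|^{-(n-2)/2},\dz^{-(n-2)/2}\}\psi$, since the raw weight is not Lipschitz at the origin. On $B_r$ one computes $x\cdot D\eta=-\tfrac{n-2}{2}|x|^{-(n-2)/2}$, $D\eta=-\tfrac{n-2}{2}|x|^{-(n-2)/2-2}x$, $|D\eta|^2=\tfrac{(n-2)^2}{4}|x|^{-n}$, so on $B_r$ the left side of \eqref{keq-1} is $(n-2)\int_{B_r}|Du|^2|x|^{-(n-2)}$ and the three right-hand terms become $(n-2)\int_{B_r}|Du|^2|x|^{-(n-2)}$, $-2(n-2)\int_{B_r}(x\cdot Du)^2|x|^{-n}$, and $\tfrac{(n-2)^2}{4}\int_{B_r}(x\cdot Du)^2|x|^{-n}$. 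After cancelling the first terms on each side, dropping the nonnegative annulus part of the left side, and bounding the annulus part of the right side by $\lesssim_n r^{-n+2}\int_{B_{3r/2}\setminus B_r}|Du|^2\,dx$ (using $|x|\sim r$ and $|D\eta|\lesssim_n r^{-(n-2)/2-1}$, $\eta\lesssim_n r^{-(n-2)/2}$ there), \eqref{keq-1} reduces to
\[
\frac{(n-2)(10-n)}{4}\int_{B_r}(x\cdot Du)^2|x|^{-n}\,dx\le C(n)\int_{B_{3r/2}\setminus B_r}|Du|^2|x|^{-n+2}\,dx,
\]
which is \eqref{keq-2} once we observe that $3\le n\le9$ makes the coefficient positive.

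The main obstacle is exactly the sign bookkeeping above: after the cancellation, \eqref{keq-1} leaves on the right the multiple $\tfrac{(n-2)^2}{4}-2(n-2)=\tfrac{(n-2)(n-10)}{4}$ of $\int_{B_r}(x\cdot Du)^2|x|^{-n}$, and this is negative \emph{precisely} when $3\le n\le9$; moving this negative term to the left-hand side is what turns \eqref{keq-1} from a tautology into the estimate \eqref{keq-2}, and this is where the optimal dimension restriction originates. The only other point requiring care is the non-Lipschitz singularity of $|x|^{-(n-2)/2}$ at the origin, which I would handle with the truncations $\eta_\dz$ and a routine limit, using $Du\in L^\fz_{\loc}$ so that $(x\cdot Du)^2|x|^{-n}\lesssim|x|^{2-n}$ remains locally integrable and the contribution of $B_\dz$ (where $\eta_\dz$ is constant, so $D\eta_\dz\equiv0$ there) is $O(\dz^2)$ and disappears in the limit.
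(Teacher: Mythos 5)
Your argument is correct and coincides with the proof of \cite[Lemma 2.1]{cfrs}, which this paper does not reprove but simply recalls: testing the stability inequality with $\xi=(x\cdot Du)\eta$, cancelling the $|Dc|^2$ and $c\eta\,Dc\cdot D\eta$ terms against the linearized equation, and then inserting the (truncated) homogeneous weight $|x|^{-(n-2)/2}\psi$, with the coefficient $\tfrac{(n-2)^2}{4}-2(n-2)=\tfrac{(n-2)(n-10)}{4}$ producing the restriction $3\le n\le 9$ exactly as in the original. The only point to state more carefully is your parenthetical on regularizing $f$: since $u$ does not solve the equation with a $C^1$ approximant $f_\ez$, the approximation should be used only to justify the a.e.\ chain rule $D(f(u))=f'(u)Du$ (equivalently, invoke that $f(u)=-\bdz u\in{\rm Lip}_{\loc}$ gives $u\in W^{3,p}_{\loc}$, that $f'_{-}(u)=f'(u)$ a.e.\ on $\{Du\neq0\}$, and that $c=x\cdot Du=0$ on $\{Du=0\}$), after which your computation goes through verbatim.
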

\begin{proof}[Proof of Lemma \ref{mo}]
For all $\eta\in C^{0,1}_c(B_2)$, by Lemma \ref{key-le} we obtain
\begin{align}\label{keq}
&(n-2)\int_{B_2}|Du|^2\eta^2\,dx\nonumber\\
&\le -2\int_{B_2}(x\cdot D\eta)\eta)|Du|^2+4\int_{B_2}(x\cdot Du)(Du\cdot D\eta)\eta\,dx+ \int_{B_2}(Du\cdot x)^2|D\eta|^2\,dx.
\end{align}
For any $0<r<1$, we take a test function $\eta\in C^{0,1}_c(B_2)$ such that
$\eta=r^{-\frac{n-2}{2}}$ in $B_r$ and $\eta=|x|^{-\frac{n-2}{2}}\phi$ in
$B_2\backslash \overline B_r$,
where $\phi\in C^\fz_c(B_2)$ satisfies
\begin{align}\label{cut-1}
\phi=1\ {\rm on}\ B_{3/2},\ 0\le \phi\le 1\ {\rm on}\ B_2,\ {\rm and}\
|D\phi|\le 8\ {\rm on}\ B_2.
\end{align}
Since
\begin{align*}
D\eta=0\ {\rm on}\ B_r,\quad\ D\eta=-\frac{n-2}{2}|x|^{-\frac {n-2}2-2}x \phi
+D\phi|x|^{-\frac{n-2}2}\ {\rm on}\ B_2\backslash B_r,
\end{align*}
then substituting this in \eqref{keq} we get
\begin{align*}
&(n-2)r^{-n+2}\int_{B_r}|Du|^2\,dx+
\frac{(n-2)(10-n)}{4}\int_{B_2\backslash B_r}(Du\cdot x)^2|x|^{-n}\phi^2\,dx\nonumber\\
&\le -2\int_{B_2\backslash B_r}|x|^{-2+n}|Du|^2\phi(x\cdot D\phi)\,dx
+4\int_{B_2\backslash B_r}|x|^{-n+2}
(x\cdot Du)\phi (Du\cdot D\phi)\,dx\\
&\quad+(2-n)\int_{B_2\backslash B_r}(Du\cdot x)^2|x|^{-n}\phi(x\cdot D\phi)\,dx
+\int_{B_2\backslash B_r}|x|^{-n+2}(x\cdot Du)^2|D\phi|^2\,dx.
\end{align*}
Note that $D\phi=0$ on $B_{3/2}$ and  $\|\phi\|_{W^{1,\fz}(B_2)}\le 10$ by \eqref{cut-1},
then the right-hand side of this inequality above can be bounded by
$C(n)\int_{B_2\backslash B_{3/2}}|Du|^2\,dx$. On the other hand,
since $3\le n\le 9$, the second term of left-hand side is
nonnegative. Thus we deduce that
\begin{align*}
r^{-n+2}\int_{B_r}|Du|^2\,dx\le C(n)\int_{B_2}|Du|^2\,dx\quad \forall 0<r\le 1.
\end{align*}
Then by  Poincar\'e inequality and H\"older inequality we have
\begin{align*}
\mint{-}_{B_r}\left|u-\mint{-}_{B_r}u\,dy\right|\,dx
&\le C(n)\left(r^{-n+2}\int_{B_r}|Du|^2\,dx\right)^{\frac12}
\le C(n)\|Du\|_{L^2(B_2)}\quad\forall 0<r<1,
\end{align*}
which further leads to
\begin{align}\label{bmo}
\|u\|_{{\rm BMO}(B_1)}\le C(n)\|Du\|_{L^2(B_2)}.
\end{align}
Hence using the equivalence of a norm on BMO space (\cite[Corollary 6.12]{d01}) one gets
\begin{align*}
\int_{B_1}\left|u-\mint{-}_{B_1}u\,dy\right|^p\,dx\le C(n,p)\|u\|^p_{{\rm BMO}(B_1)}\quad \forall p>2.
\end{align*}
Thank to this, via H\"older inequality and \eqref{bmo} we obtain
\begin{align*}
\int_{B_1}|u|^p\,dx\le \int_{B_1}\left|u-\mint{-}_{B_1}u\,dy\right|^p\,dx
+\left(\mint{-}_{B_1}|u|\,dx\right)^p\le C(n,p)\|u\|^p_{W^{1,2}(B_2)}.
\end{align*}
From this, now by H\"older inequality with $p>2$ we get
\begin{align*}
\int_{B_r}u^2\,dx\le C(n,p)\left(\int_{B_r}|u|^p\,dx\right)^{\frac 2p} r^{\frac{n(p-2)}{p}}
\le C(n,p)\|u\|^2_{W^{1,2}(B_2)}r^{\frac{n(p-2)}{p}}\quad \forall 0<r<1.
\end{align*}
This proof is complete.

\end{proof}

\section*{Funding}
The author is supported by National Natural Science of
Foundation of China (No. 12201612 \& No. 11688101) and by China Postdoctoral Science Foundation funded project (No. BX20220328). The author is also supported by National
 key R \& D Program of China (No. 2021 YFA 1003100).

\section*{Acknowledgments}
The author would like to thank Professor Xavier Cabr\'e for pointing out
that the constant $C$ in Lemma \ref{key-it} should depend on
$A$ and $K$. Also the author would like to thank Professor Yi Ru-Ya Zhang and Yuan Zhou for many helpful comments and valuable discussions related to the subject.
In particular, Professor Yi Ru-Ya Zhang gives several important suggestions
on proof of Lemmas \ref{key-it} and \ref{it}. The author would also like to thank  the anonymous referee for several valuable comments and suggestions in the previous version of this paper.

\noindent  Fa Peng

\noindent
Academy of Mathematics and Systems Science, the Chinese Academy of Sciences, Beijing 100190, P. R. China

\noindent{\it E-mail }:  \texttt{fapeng@amss.ac.cn}

\end{document}